\newcolumntype{M}[1]{>{\centering\arraybackslash}m{#1}}
\newtheorem{lemma}{Lemma}
\newtheorem{corollary}{Corollary}
\newtheorem{definition}{Definition}
\newtheorem{theorem}{Theorem}
\newtheorem{remark}{Remark}
\numberwithin{example}{section}
\numberwithin{equation}{section}
\numberwithin{theorem}{section}
\numberwithin{lemma}{section}
\numberwithin{definition}{section}
\numberwithin{corollary}{section}
\newtheorem*{problem}{Problem}
\title{Radius Problems For Functions Associated with a Nephroid Domain}
\author{Lateef Ahmad Wani$^\dagger$}
\address{$^\dagger$Department of Mathematics\\ Indian Institute of Technology, Roorkee-247667, Uttarakhand, India}
\email{lateef17304@gmail.com}
\author{A. Swaminathan$^\ddagger$}
\address{$^\ddagger$Department of Mathematics\\ Indian Institute of Technology, Roorkee-247667, Uttarakhand, India}
\email{mathswami@gmail.com, a.swaminathan@ma.iitr.ac.in}
\begin{document}
	
\pagestyle{myheadings}
\begin{abstract}
Let $\mathcal{S}^*_{Ne}$ be the collection of all analytic functions $f(z)$ defined on the open unit disk $\mathbb{D}$ and satisfying the normalizations $f(0)=f'(0)-1=0$ such that the quantity $zf'(z)/f(z)$ assumes values from the range of the function $\varphi_{\scriptscriptstyle{Ne}}(z):=1+z-z^3/3\,,z\in\mathbb{D}$, which is the interior of the nephroid given by
\begin{align*}
\left((u-1)^2+v^2-\frac{4}{9}\right)^3-\frac{4 v^2}{3}=0.
\end{align*}
In this work, we find sharp $\mathcal{S}^*_{Ne}$-radii for several geometrically defined function classes introduced in the recent past. In particular, $\mathcal{S}^*_{Ne}$-radius for the starlike class $\mathcal{S}^*$ is found to be $1/4$. Moreover, radii problems related to the families defined in terms of ratio of functions are also discussed. Sharpness of certain radii estimates are illustrated graphically.
\end{abstract}
\subjclass[2010] {30C45, 30C80}
\keywords{Starlike functions, Subordination, Radius problem, Bernoulli and Booth lemniscates, Cardioid, Nephroid}

\maketitle

\pagestyle{myheadings}

\markboth{Lateef Ahmad Wani and A. Swaminathan}{Radius Problems Associated with a Nephroid Domain}

\section{Introduction} \label{Section-Introduction-RRN}
Let $\mathcal{A}$ be the class of all analytic functions satisfying the conditions $f(0)=0$ and $f'(0)=1$ in the open unit disc $\mathbb{D}:=\left\{z:|z|<1\right\}$. Clearly, for each $f\in\mathcal{A}$, the function $\mathcal{Q}_f(z):\mathbb{D}\to\mathbb{C}$ given by
\begin{align} \label{Definition-Basic-Fraction}
\mathcal{Q}_f(z):=\frac{zf'(z)}{f(z)}
\end{align}
is analytic and satisfies $\mathcal{Q}_f(0)=1$.
Let $\mathcal{S}\subset\mathcal{A}$ be the family of univalent functions, and $\mathcal{S}^*(\alpha)\subset\mathcal{S}$ be the family of starlike functions of order $\alpha\,(0\leq\alpha<1)$ given by the analytic characterization
\begin{align*}
\mathcal{S}^*(\alpha):=\left\{f\in\mathcal{A}:\mathrm{Re}\left(\mathcal{Q}_f(z)\right)>\alpha\right\}.
\end{align*}
Further, let us define the class $\mathcal{C}(\alpha)$ by the relation: $f\in\mathcal{C}(\alpha) \iff zf'\in\mathcal{S}^*(\alpha)$. The functions in $\mathcal{S}^*:=\mathcal{S}^*(0)$ and $\mathcal{C}:=\mathcal{C}(0)$ are, respectively, starlike and convex in $\mathbb{D}$. For functions $f$ and $g$ analytic on $\mathbb{D}$, we say that $f$ is {\it subordinate} to $g$, written $f\prec{g}$, if there exists an analytic function $w$ satisfying $w(0)=0$ and $|w(z)|<1$ such that $f(z)=g(w(z))$. Indeed, $f\prec g \implies f(0)=g(0)$ and $f(\mathbb{D})\subset g(\mathbb{D})$. Furthermore, if the function $g$ is univalent, then $f\prec g \iff f(0)=g(0)$ and $f(\mathbb{D})\subset g(\mathbb{D})$.
By unifying several earlier results on subordination, Ma and Minda \cite{Ma-Minda-1992-A-unified-treatment} introduced the function class $\mathcal{S}^*(\varphi)$ which, for brevity, we write as definition.
\begin{definition}\label{Definition-Ma-Minda-classes-Full}
Let $\mathcal{S}^*(\varphi)$ denote the class of functions characterized as	
\begin{align}\label{Definition-Ma-Minda-classes-varphi}
\mathcal{S}^*(\varphi):=\left\{f\in\mathcal{A}:\mathcal{Q}_f(z)\prec\varphi(z)\right\},
\end{align}
where the analytic function $\varphi:\mathbb{D}\to\mathbb{C}$ is required to satisfy the following conditions:
\begin{enumerate}[{\rm(i)}]
	\item
	$\varphi(z)$ is univalent with $\mathrm{Re}(\varphi)>0$,
	\item
	$\varphi(\mathbb{D})$ is starlike with respect to $\varphi(0)=1$,
	\item
	$\varphi(\mathbb{D})$ is symmetric about the real line, and
	\item
	$\varphi'(0)>0$.
\end{enumerate}
\end{definition}
Evidently, for every such $\varphi$, $\mathcal{S}^*(\varphi)$ is always a subclass of the class of starlike functions $\mathcal{S}^*$, and $\mathcal{S}^*(\varphi)=\mathcal{S}^*$ for $\varphi(z)=(1+z)/(1-z)$. We call $\mathcal{S}^*(\varphi)$ as the Ma-Minda type starlike class associated with $\varphi$. Specializing the function $\varphi$ in \eqref{Definition-Ma-Minda-classes-varphi} yields several important function classes. For instance,  $\mathcal{S}^*(\alpha):=\mathcal{S}^*\left(\frac{1+(1-2\alpha)z}{1-z}\right)$, the Janowski class $\mathcal{S}^*[A,B]:=\mathcal{S}^*\left(\frac{1+Az}{1+Bz}\right)$, where $-1\leq{B}<A\leq1$, and many more.
This interesting approach of constructing starlike classes has attracted many researchers, as a result, several Ma-Minda type starlike classes have been introduced and studied recently by modifying the right side function $\varphi$ in \eqref{Definition-Ma-Minda-classes-varphi} appropriately. Below we mention a few of the recently introduced Ma-Minda type classes which will be later used in our discussion.
\begin{enumerate}[(1)]
\item
	$\mathcal{S}^*_L:=\mathcal{S}^*(\sqrt{1+z})$. This class was first introduced by Sok\'{o}{\l} and Stankiewicz \cite{Sokol-J.Stankwz-1996-Lem-of-Ber}, and later discussed extensively by many authors, see \cite{Ali-Jain-Ravi-2012-Radii-LemB-AMC,Cang-Liu-2015-Radius-Convexity-LemB-Expo-Math, Sokol-2009-Radius-Problems-SL*-AMC, Sokol-Thomas-2018-LemB-Houston-Journal} and references therein. The function $\varphi_{\scriptscriptstyle{L}}(z):=\sqrt{1+z}$ maps $\mathbb{D}$ onto the region bounded by the right-loop of {\it lemniscate of Bernoulli} $|w^2-1|=1$.
\item
	$\mathcal{S}^*_{RL}:=\mathcal{S}^*(\varphi_{\scriptscriptstyle{RL}})$ with
	\begin{align} \label{Definition-Shifted-Lem-Ber}
	\varphi_{\scriptscriptstyle{RL}}(z):=\sqrt{2}-(\sqrt{2}-1)\sqrt{\frac{1-z}{1+2(\sqrt{2}-1)z}}, \qquad z\in\mathbb{D},
	\end{align}
	was introduced by Mendiratta et al. \cite{Mendiratta-2014-Shifted-Lemn-Bernoulli}. The function $\varphi_{\scriptscriptstyle{RL}}(z)$ maps $\mathbb{D}$ univalently onto the interior of left-loop of shifted lemniscate of Bernoulli $|(w-\sqrt{2})^2-1|=1$.
\item
	$\mathcal{S}^*_{\leftmoon}:=\mathcal{S}^*(z+\sqrt{1+z^2})$. Introduced by Raina and Sok\'{o}{\l} \cite{Raina-Sokol-2015-Crescent-Shaped-I}, the analytic function $\varphi_{\scriptscriptstyle{\leftmoon}}(z):=z+\sqrt{1+z^2}$
	sends $\mathbb{D}$ onto a {\it crescent} shaped region.
\item
	$\mathcal{S}^*_{e}:=\mathcal{S}^*(e^{z})$ was considered by Mendiratta et al. \cite{Mendiratta-Ravi-2015-Expo-BMMS}.
\item
	$\mathcal{S}^*_C:=\mathcal{S}^*(1+4z/3+2z^2/3)$ was introduced by Sharma et al. \cite{Sharma-Ravi-2016-Cardioid}. The function $\varphi_{\scriptscriptstyle{C}}(z):=1+4z/3+2z^2/3$ sends $\mathbb{D}$ onto a region bounded by a {\it cardioid}.
\item
	$\mathcal{S}^*_{R}:=\mathcal{S}^*(\varphi_{\scriptscriptstyle{0}})$, where $\varphi_{\scriptscriptstyle{0}}(z)$ is the rational function
	\begin{align*}
	\varphi_{\scriptscriptstyle{0}}(z):=
	1+\frac{z}{k}\left(\frac{k+z}{k-z}\right)=1+\frac{1}{k}z+\frac{2}{k^2}z^2+\cdots,
	\quad k=\sqrt{2}+1.
	\end{align*}
	The class $\mathcal{S}^*_{R}$ was introduced by Kumar and Ravichandran \cite{Kumar-Ravi-2016-Starlike-Associated-Rational-Function}.
\item
	$\mathcal{BS}^*(\alpha):=\mathcal{S}^*\left(1+{z}/{(1-\alpha{z}^2)}\right)$, $0\leq\alpha<1$, is the class associated with the {\it Booth lemniscate} which was introduced by Kargar et al. \cite{Kargar-2019-Booth-Lem-A.M.Physics}.
\item
	$\mathcal{S}^*_S:=\mathcal{S}^*(1+\sin{z})$ was introduced by Cho et al. \cite{Cho-2019-Sine-BIMS}. The boundary of $\varphi_{\scriptscriptstyle{S}}(\mathbb{D})$, where $\varphi_{\scriptscriptstyle{S}}(z):=1+\sin{z}$, is an {\it eight-shaped} curve.
\item
	For $0\leq\alpha<1$, Khatter et al. \cite{Khatter-Ravi-2019-Lem-Exp-Alpha-RACSAM} introduced
	\begin{align*}
	\mathcal{S}^*_{\alpha,e}:=\mathcal{S}^*\left(\alpha+(1-\alpha)e^z\right)
	\text{ and }
	\mathcal{S}^*_L(\alpha):=\mathcal{S}^*\left(\alpha+(1-\alpha)\sqrt{1+z}\right).
	\end{align*}
	For $\alpha=0$, the above classes reduce to $\mathcal{S}^*_e$ and $\mathcal{S}^*_L$, respectively.
\item
	Very recently, Wani and Swaminathan \cite{Wani-Swami-Nephroid-Basic} introduced the function class
	\begin{align*}
	\mathcal{S}^*_{Ne}:=\mathcal{S}^*\left(1+z-z^3/3\right).
	\end{align*}
	The authors proved that the analytic function $\varphi_{\scriptscriptstyle{Ne}}(z):=1+z-z^3/3$
	maps $\mathbb{D}$ univalently onto the interior of a 2-cusped {\it kidney-shaped} curve called {\it nephroid} given by
	\begin{align} \label{Equation-of-Nephroid}
	\left((u-1)^2+v^2-\frac{4}{9}\right)^3-\frac{4 v^2}{3}=0.
	\end{align}
\end{enumerate}
Apart from studying several characteristic properties of the region bounded by the nephroid curve \eqref{Equation-of-Nephroid} (see \Cref{Figure-Nephroid}), the authors in \cite{Wani-Swami-Nephroid-Basic} discussed certain inclusion relations, coefficient problems, and a few subordination results related to the function class $\mathcal{S}^*_{Ne}$.

\begin{figure}[H]
	\includegraphics{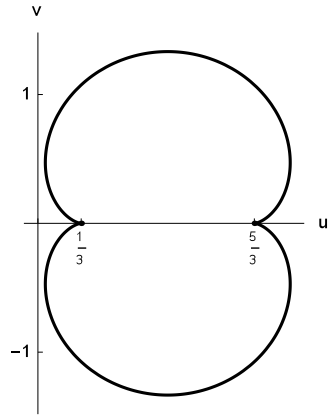}
	\caption{Nephroid: $\left((u-1)^2+v^2-4/9\right)^3-4 v^2/3=0$.}
	\label{Figure-Nephroid}
\end{figure}

\section{Radius Problems}
Let $\mathcal{F}$ and $\mathcal{G}$ be two subclasses of $\mathcal{A}$. Then the $\mathcal{F}$-radius for the class $\mathcal{G}$ is the largest number $\rho\in(0,1)$ such that $r^{-1}f(rz)\in\mathcal{F}$ for all $f\in\mathcal{G}$, where $0<r\leq\rho$. Moreover, the number $\rho$ is said to be {\it sharp} if there exists a function $f_0\in\mathcal{G}$ such that $r^{-1}f_0(rz)\not\in\mathcal{F}$ whenever $r>\rho$ and such a function $f_0$ is called {\it extremal function}. The problem of finding the number ``$\rho$" is called a radius problem in geometric function theory (GFT). In the sequel, by
$${R}_{\mathcal{F}}(\mathcal{G})=\rho,$$
we mean that $\rho$ is the $\mathcal{F}$-radius of $\mathcal{G}$. Below we present a table depicting several radii results related to some of the most important families in univalent function theory. For further details, we refer to \cite[Chapter 13]{Goodman-Book-UFs-II-1983}.

\begin{center}
	\begin{table}[H]
		\centering
	\begin{tabular}{ |c|c|M{1.5cm}|m{4.5cm}| }
		\hline
	  {\boldmath{${R}_{\mathcal{F}}(\mathcal{G})$}}            & {\boldmath{$\rho$}}  & {\bf Approx. value}  & {\bf Extremal function} \\
		\hline
		${R}_{\mathcal{C}}(\mathcal{S})={R}_{\mathcal{C}}(\mathcal{S}^*)$   & $2-\sqrt{3}$         & 0.267          & $\frac{z}{(1-z)^2}$ \\
	    \hline
	    ${R}_{\mathcal{C}}\left(\mathcal{S}^*(1/2)\right)$ & $(2\sqrt{3}-3)^{1/2}$     & 0.68125         &    $\frac{z}{1-z}$          \\
		\hline
		${R}_{\mathcal{C}(\alpha)}(\mathcal{S}^*)$ & $\frac{2-\sqrt{3+\alpha^2}}{1+\alpha}$ & ~ & $\frac{z}{(1-z)^2}$  \\
		\hline
		${R}_{\mathcal{C}(\alpha)}(\mathcal{C})$   & $\frac{1-\alpha}{1+\alpha}$            & ~ &
			$\frac{1-(1-z)^{2\alpha-1}}{2\alpha-1}$, if $\alpha\neq1/2$
			$-\ln(1-z)$, if $\alpha=1/2$
			\\
		\hline
	\end{tabular}
\caption{Radii results of well-known classes}
\end{table}
\label{table:I}
\end{center}

Solving the radius problems for classes of functions with nice geometries is continuing to be an active area of research in GFT. Investigating the radii problems became a bit easier after the dawn of subordination theory, particularly in the cases where the class $\mathcal{F}$ has a Ma-Minda type representation \eqref{Definition-Ma-Minda-classes-varphi}. For such classes, we redefine the radius problem in geometrical terms as:
\begin{definition}
	Let $\mathbb{D}_r:=\{z:|z|<r\}$ and $\Omega_{\varphi}:=\varphi(\mathbb{D})$. By $\mathcal{S}^*(\varphi)$-radius for the class $\mathcal{G}\subset\mathcal{A}$, denoted by ${R}_{\mathcal{S}^*_{\varphi}}(\mathcal{G})$, we mean the largest number $\rho\in(0,1)$ such that each $f\in\mathcal{G}$ satisfies
	\begin{align*}
	\mathcal{Q}_f(\mathbb{D}_r)\subset\Omega_{\varphi} ~ ~ \text{\ for every }~ r\leq\rho,
	\end{align*}
	where $\mathcal{Q}_f(z)$ is defined in \eqref{Definition-Basic-Fraction}. Moreover, the number $\rho$ is said to be sharp if we can find a function $f_0\in\mathcal{G}$ such that
	\begin{align*}
	\mathcal{Q}_{f_0}(\partial\mathbb{D}_\rho)\cap\partial\Omega_{\varphi}\neq\emptyset.
	\end{align*}
\end{definition}
For certain recent results in this direction, we provide the following table along with the respective references. Also, see
\cite{Ali-Jain-Ravi-2012-Radii-LemB-AMC, Ali-Jain-Ravi-2013-Radius-Constants-BMMS,  Khatter-Ravi-2019-Lem-Exp-Alpha-RACSAM,Ravi-Ronning-1997-Complex-Variables}
and the references therein.
\begin{center}
	\begin{table}[H]
		\centering
		\begin{tabular}{ |c|c|M{1.5cm}|M{2.8cm}|c| }
			\hline
			{\boldmath{${R}_{\mathcal{S}^*_{\varphi}}(\mathcal{G})$}}	& {\boldmath{$\rho$}} & {\bf Approx. value} & {\bf Extremal function}& {\bf Refer} \\
			\hline
			${R}_{\mathcal{S}^*_{\leftmoon}}(\mathcal{S}^*)$ & $\sqrt{2}-1$ & $0.41421$ & $\frac{z}{(1-z)^2}$ & \cite{Gandhi-Ravi-2017-Lune}    \\
			\hline
			${R}_{\mathcal{S}^*_{RL}}(\mathcal{S}^*)$ & {\scriptsize{$\sqrt{(\sqrt{5}-2)(5\sqrt{2}-7)}$}} & $0.129525$ & " & \cite{Mendiratta-2014-Shifted-Lemn-Bernoulli}    \\
			\hline
			${R}_{\mathcal{S}^*_{e}}(\mathcal{S}^*)$ & $\frac{e-1}{e+1}$ & $0.462117$ & " & \cite{Mendiratta-Ravi-2015-Expo-BMMS}    \\
			\hline
			${R}_{\mathcal{S}^*_{\leftmoon}}(\mathcal{C})$ & $2-\sqrt{2}$ & $0.58578$ & $\frac{z}{1-z}$ & \cite{Gandhi-Ravi-2017-Lune}    \\
			\hline
			${R}_{\mathcal{S}^*_{e}}(\mathcal{C})$ & $\frac{e-1}{e}$ & $0.632121$ & " & \cite{Mendiratta-Ravi-2015-Expo-BMMS}    \\
			\hline
			${R}_{\mathcal{S}^*_{S}}(\mathcal{S}^*_{L})$ & $(2-\sin1)\sin1$ & 0.975 & $\frac{4z\exp(2\sqrt{1+z}-2)}{1+\sqrt{1+z}}$ & \cite{Cho-2019-Sine-BIMS}     \\
			\hline
			${R}_{\mathcal{S}^*_{R}}(\mathcal{S}^*_{L})$ & $8\sqrt{2}-11$ & 0.313708 & " & \cite{Kumar-Ravi-2016-Starlike-Associated-Rational-Function}     \\
			\hline
			${R}_{\mathcal{S}^*(\alpha)}(\mathcal{S}^*_{L})$ & $1-\alpha^2$ &  & " & \cite{Sokol-2009-Radius-Problems-SL*-AMC} \\
			\hline
			${R}_{\mathcal{S}^*_{\leftmoon}}(\mathcal{S}^*_{e})$ & $\log\left(\frac{\sqrt{2}+1}{2}\right)$ & 0.188226 & $z\exp\left(\int_0^z\frac{e^{t}-1}{t}\,dt\right)$ & \cite{Gandhi-Ravi-2017-Lune}     \\
			\hline
			${R}_{\mathcal{S}^*_{S}}(\mathcal{S}^*_{C})$ & {\scriptsize{$\frac{1}{2}\left(\sqrt{2(2+3\sin1)}-2\right)$}} & 0.504 & $z\exp\left(\frac{4z}{3}+\frac{z^2}{3}\right)$ & \cite{Cho-2019-Sine-BIMS}     \\
			\hline
			${R}_{\mathcal{S}^*_{L}}(\mathcal{S}^*_{C})$ & $-1+\sqrt{-\frac{1}{2}+\frac{3}{\sqrt{2}}}$ & 0.273311 & " & \cite{Sharma-Ravi-2016-Cardioid}     \\
			\hline
			${R}_{\mathcal{BS}^*(\alpha)}(\mathcal{S}^*_{S})$ & $\sinh^{-1}\left(\frac{1}{1+\alpha}\right)$ & & $z\exp\left(\int_0^z\frac{\sin{t}}{t}\,dt\right)$ &  \cite{Cho-Ravi-2018-Diff-Sub-Booth-Lem-TJM}    \\
			\hline
			${R}_{\mathcal{S}^*_{R}}(\mathcal{S}^*_{S})$ & {\scriptsize{$\log(3-2\sqrt{2})+\sqrt{6}(\sqrt{2}-1)$}} & 0.170742 & " &  \cite{Kumar-Ravi-2016-Starlike-Associated-Rational-Function}    \\
			\hline 	
		\end{tabular}
		\caption{Some recent radii results}
	\end{table}
	\label{table:II}
\end{center}
Motivated by the aforementioned works, in this work, we consider the function class $\mathcal{S}^*_{Ne}$ introduced by the authors in \cite{Wani-Swami-Nephroid-Basic} and find sharp $\mathcal{S}^*_{Ne}$-radii estimates for certain geometrically defined function classes available in the literature. More explicitly, in \Cref{Section-Radii-Results-Ma-Minda-Classes-RRN}, the $\mathcal{S}^*_{Ne}$-radius problem is discussed for some of the recently introduced Ma-Minda type starlike classes. In particular, the Janowski class $\mathcal{S}^*[A,B]$ is considered, and consequently, the best possible $\mathcal{S}^*_{Ne}$-radii for $\mathcal{S}^*$ and $\mathcal{C}$ are obtained as special cases. Finally, in \Cref{Section-Radii-Results-Ratio-Functions-RRN}, we consider several function families defined by the ratio of analytic functions and calculate $\mathcal{S}^*_{Ne,n}$-radius for them, where
$$\mathcal{S}^*_{Ne,n}:=\mathcal{S}^*_{Ne}\cap\mathcal{A}_{n},$$
and $\mathcal{A}_{n}$ consists of analytic functions of the form $f(z)=z+\sum_{k=n+1}^\infty a_kz^k$. Apart from proving the sharpness of radii estimates analytically, certain graphical illustrations are also provided.

The following lemma related to the largest disks contained in the region bounded by the nephroid curve \eqref{Equation-of-Nephroid} is instrumental in proving our results.
\begin{lemma}\label{Lemma-Radius-ra-NeP}
Let $1/3<a<5/3$, and let $r_a$  be given by
\begin{align*}
r_a=
\begin{cases}
a-{1}/{3} & \text{ if } ~ {1}/{3}<a\leq1,\\
{5}/{3}-a & \text{ if } ~ 1\leq{a}<{5}/{3}.
\end{cases}
\end{align*}
Then
\begin{align*}
\left\{w\in\mathbb{C}:|w-a|<r_a\right\}\subseteq\Omega_{Ne},
\end{align*}
where $\Omega_{Ne}:=\varphi_{\scriptscriptstyle{Ne}}(\mathbb{D})$, the region bounded by the nephroid \eqref{Equation-of-Nephroid}.
\end{lemma}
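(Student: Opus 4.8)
The plan is to compute the exact distance from the real center $a$ to the boundary curve $\partial\Omega_{Ne}$ and to show that its minimum equals $r_a$, attained precisely at one of the two cusps. Since $\varphi_{\scriptscriptstyle{Ne}}$ maps $\mathbb{D}$ univalently onto $\Omega_{Ne}$, the boundary $\partial\Omega_{Ne}$ is traced by $\theta\mapsto\varphi_{\scriptscriptstyle{Ne}}(e^{i\theta})=u(\theta)+iv(\theta)$ with
\[
u(\theta)=1+\cos\theta-\tfrac13\cos3\theta,\qquad v(\theta)=\sin\theta-\tfrac13\sin3\theta .
\]
A direct check shows the curve is symmetric about the line $u=1$: the substitution $\theta\mapsto\pi-\theta$ sends $(u,v)$ to $(2-u,v)$. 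This symmetry interchanges the roles of $a$ and $2-a$ and of the two cusps $\varphi_{\scriptscriptstyle{Ne}}(1)=5/3$ and $\varphi_{\scriptscriptstyle{Ne}}(-1)=1/3$, so it suffices to treat $1/3<a\le1$ and to prove the single inequality $|\,\varphi_{\scriptscriptstyle{Ne}}(e^{i\theta})-a\,|\ge a-\tfrac13$ for all $\theta$.

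First I would write $w=\varphi_{\scriptscriptstyle{Ne}}(e^{i\theta})$ and expand the squared distance. Putting $b:=1-a$ and carrying out the $e^{\pm ik\theta}$-bookkeeping, one obtains the purely trigonometric expression
\[
g(\theta):=|w-a|^2=b^2+\tfrac{10}{9}+2b\cos\theta-\tfrac23\cos2\theta-\tfrac{2b}{3}\cos3\theta .
\]
Substituting $t=\cos\theta\in[-1,1]$ (via $\cos2\theta=2t^2-1$ and $\cos3\theta=4t^3-3t$) turns this into the cubic
\[
g(t)=b^2+\tfrac{16}{9}+4bt-\tfrac43 t^2-\tfrac{8b}{3}t^3 .
\]
Evaluating at the endpoints recovers exactly the cusp distances, $g(-1)=(a-\tfrac13)^2=r_a^2$ and $g(1)=(\tfrac53-a)^2$, so the claim reduces to the single-variable inequality $g(t)\ge g(-1)$ on $[-1,1]$ for $b\ge0$.

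The decisive step is the factorization. Since $t=-1$ is a root of $g(t)-g(-1)$, polynomial division gives
\[
g(t)-g(-1)=\tfrac13\,(t+1)\,\bigl(-8b\,t^2+(8b-4)\,t+4b+4\bigr).
\]
On $[-1,1]$ the factor $t+1$ is nonnegative, so everything hinges on the sign of $q(t):=-8bt^2+(8b-4)t+4b+4$. For $0\le b<\tfrac23$ (which is exactly the range $1/3<a\le1$) the quadratic $q$ is concave, its leading coefficient being $-8b\le0$, and it satisfies $q(1)=4b\ge0$ and $q(-1)=8-12b>0$. A concave function attains its minimum over an interval at an endpoint, so $q\ge0$ throughout $[-1,1]$, whence $g(t)\ge g(-1)=r_a^2$. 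Because the center $a$ lies on the real segment $(1/3,5/3)\subset\Omega_{Ne}$ and the distance from $a$ to $\partial\Omega_{Ne}$ is therefore $r_a$, the open disk $\{w:|w-a|<r_a\}$ cannot reach the boundary and so is contained in $\Omega_{Ne}$; the symmetry above then yields the remaining case $1\le a<5/3$. I expect the only genuine obstacle to be organizational: correctly assembling $g(\theta)$ and then confirming that the minimum distance is delivered by the \emph{cusp} rather than by an interior point of the boundary arc. Once the factorization exposes the concave quadratic $q$, the positivity is immediate, but anticipating that clean factorization—equivalently, guessing in advance that the nearest boundary point is the cusp—is the one nontrivial insight.
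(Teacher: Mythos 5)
Your proof is correct, and it reaches the same conclusion by a genuinely different minimization argument. Both you and the paper reduce the problem to the same cubic: your $g(t)$ with $b=1-a$ is exactly the paper's $H(x)=\tfrac{16}{9}+(a-1)^2-4(a-1)x-\tfrac43x^2+\tfrac83(a-1)x^3$ in $x=\cos\theta\in[-1,1]$. The paper then minimizes by calculus: it finds the critical points $x_0,x_1$ of $H$, argues that only $x_0$ lies in $(-1,1)$ (a step it supports partly by ``further verification'' and plots), checks $H''(x_0)<0$ so that $x_0$ is a maximum, concludes the minimum is attained at an endpoint, and finally compares $H(1)-H(-1)=-\tfrac{8}{3}(a-1)$ to split the two cases of $r_a$. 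You instead use the reflection symmetry $\theta\mapsto\pi-\theta$, i.e.\ $(u,v)\mapsto(2-u,v)$, to reduce everything to $1/3<a\le1$, and then prove $g(t)\ge g(-1)$ directly via the factorization $g(t)-g(-1)=\tfrac13(t+1)\bigl(-8bt^2+(8b-4)t+4b+4\bigr)$, the quadratic factor being nonnegative on $[-1,1]$ because it is concave with nonnegative endpoint values $4b$ and $8-12b$. I checked the symmetry identity, the expansion of $g(\theta)$, the passage to $g(t)$, the endpoint values $g(-1)=(a-\tfrac13)^2$, $g(1)=(\tfrac53-a)^2$, and the factorization; all are correct, and your closing topological step (center on $(1/3,5/3)\subset\Omega_{Ne}$, distance to the boundary at least $r_a$, hence the open disk stays inside the open connected region) is sound since $\varphi_{\scriptscriptstyle{Ne}}$ is increasing on $(-1,1)$. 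What your route buys: it is entirely algebraic and self-contained, avoiding the root-location analysis that the paper settles partly by plots, and the symmetry halves the case analysis. What the paper's route buys: it handles both cases $a\le1$ and $a\ge1$ in one pass and exhibits the interior critical point of the distance function explicitly, which gives slightly more information about where the boundary is farthest from the center.
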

\begin{proof}
For $z=e^{it}$, the parametric equations of the nephroid $\varphi_{\scriptscriptstyle{Ne}}(z)=1+z-z^3/3$ are
\begin{align*}
u(t)=1+\cos{t}-(\cos{3t})/3, \quad v(t)=\sin{t}-(\sin{3t})/3, \quad t\in(-\pi,\pi].
\end{align*}
Let $a\in(1/3,5/3)$. Then the squared distance from the point $(a,0)$ on the real line to the points on the nephroid curve \eqref{Equation-of-Nephroid} is given by
\begin{align*}
\xi(t)&=(a-u(t))^2+(v(t))^2\\
     &=\frac{16}{9}+(a-1)^2-4(a-1)\cos{t}-\frac{4}{3}\cos^2{t}+\frac{8}{3}(a-1)\cos^3{t}\\
      &=\frac{16}{9}+(a-1)^2-4(a-1)x-\frac{4}{3}x^2+\frac{8}{3}(a-1)x^3=:H(x),
\end{align*}
where $x=\cos{t}\in[-1,1],\,-\pi<t\leq\pi$. Since the nephroid curve \eqref{Equation-of-Nephroid} is symmetric about the real line, it is enough to consider $t\in[0,\pi]$. A simple computation shows that $H'(x)=0$ if, and only if,
\begin{align*}
x=x_0:=\frac{1-\sqrt{1+18(a-1)^2}}{6(a-1)} ~ \text{ and } ~ x=x_1:=\frac{1+\sqrt{1+18(a-1)^2}}{6(a-1)}.
\end{align*}
For $a\in(1/3,5/3)$, further verification reveals (also see the plots in \Cref{Figures-Roots-RRN}) that only the number $x_0$ lies between $-1$ and $1$. Moreover,
\begin{align*}
H''(x_0)=-\frac{8}{3}\sqrt{1+18(a-1)^2}<0 ~\text{ for each } ~a.
\end{align*}
This shows that $x_0$ is the point of maxima for
the function $H(x)$ and hence, $H(x)$ is increasing in the interval
$[-1,x_0]$ and decreasing in $[x_0,1]$. Thus, for $1/3<a<5/3$, we have
\begin{align*}
\min_{0\leq{t}\leq\pi}{\xi(t)}=\min\left\{H(-1),\, H(1)\right\}.
\end{align*}
\begin{figure}[H]
\begin{subfigure}[b]{0.45\textwidth}
	\centering
 \includegraphics[scale=1.3]{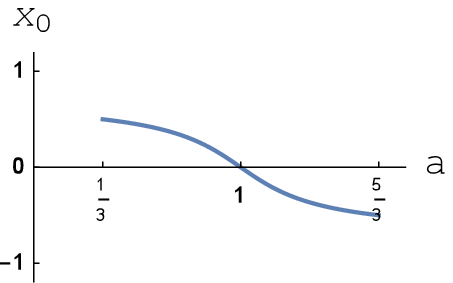}
\label{Figure-Root1-RRN}
\end{subfigure}
\begin{subfigure}[b]{0.45\textwidth}
	\centering
 \includegraphics[scale=1.3]{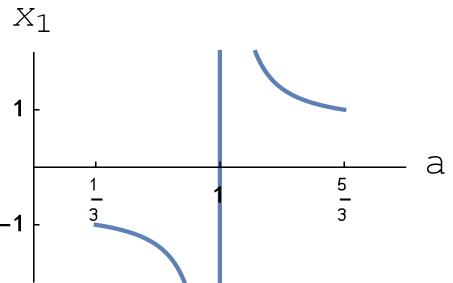}
\label{Figure-Root2-RRN}
\end{subfigure}
\caption{Plots of $x_0=x_0(a)$ and $x_1=x_1(a)$ for $a\in(1/3,5/3)$.}
\label{Figures-Roots-RRN}
\end{figure}

\noindent
Also,
$$H(1)-H(-1)=-{8(a-1)}/{3},$$
so that $H(-1)\leq{H(1)}$ whenever $a\leq1$, and $H(1)\leq{H(-1)}$ whenever $a\geq1$. Therefore, we conclude that
\begin{align*}
r_{a}=\min_{0\leq{t}\leq\pi}\sqrt{\xi(t)}=
\begin{cases}
\sqrt{H(-1)}=a-{1}/{3} &\text{ whenever } {1}/{3}<a\leq1,\\
\sqrt{H(1)}={5}/{3}-a   &\text{ whenever } 1\leq{a}<{5}/{3}.
\end{cases}
\end{align*}
Thus, the disk $\left\{w\in\mathbb{C}:|w-a|<r_a\right\}$ completely lies inside the region $\Omega_{Ne}$, and establishes the lemma.
\end{proof}

\section{$\mathcal{S}^*_{Ne}$-Radius For Ma-Minda Type Classes}
\label{Section-Radii-Results-Ma-Minda-Classes-RRN}
For $-1\leq{B}<A\leq{1}$, let $\mathcal{P}[A,B]$ be the collection of analytic functions $p:\mathbb{D}\to\mathbb{C}$ that are of  the form $p(z)=1+\sum_{n=1}^{\infty}p_nz^n$ and satisfy the subordination $p(z)\prec{(1+Az)/(1+Bz)}$.
Also, recall \cite{Janowski-1973-class-some-extremal} that the Janowski starlike class $\mathcal{S}^*[A,B]$ consists of functions $f\in\mathcal{A}$ satisfying $\mathcal{Q}_f(z)\in\mathcal{P}[A,B]$. To prove the results we use the following lemma.

\begin{lemma}[{\cite[Lemma 2.1, p. 267]{Ravi-Ronning-1997-Complex-Variables}}]\label{Lemma2.1-Ravi-Ronning-1997-P{A,B}}
If $p\in\mathcal{P}[A,B]$, then for $|z|=r<1$,
\begin{align*}
\left|p(z)-\frac{1-ABr^{2}}{1-B^2r^2}\right|\leq\frac{(A-B)r}{1-B^2r^2}.
\end{align*}
\end{lemma}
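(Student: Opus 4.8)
The plan is to exploit the subordination that defines $\mathcal{P}[A,B]$ together with the Schwarz lemma, thereby reducing the estimate to the elementary fact that a M\"obius transformation carries a disk onto a disk. Since $p\in\mathcal{P}[A,B]$ means $p(z)\prec\psi(z)$, where $\psi(\zeta):=(1+A\zeta)/(1+B\zeta)$ is univalent on $\mathbb{D}$, the subordination principle recorded in the introduction guarantees a Schwarz function $w$ (with $w(0)=0$ and $|w(z)|<1$) such that $p(z)=\psi(w(z))$. By the Schwarz lemma, $|w(z)|\leq|z|=r$ whenever $|z|=r$, and hence $p(z)$ lies in the image $\psi(\overline{\mathbb{D}_r})$. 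It therefore suffices to show that this image is precisely the closed disk with center $(1-ABr^2)/(1-B^2r^2)$ and radius $(A-B)r/(1-B^2r^2)$.

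To identify this image, I would first determine the image of the circle $|\zeta|=r$ under $\psi$. Inverting $\omega=\psi(\zeta)$ gives $\zeta=(\omega-1)/(A-B\omega)$, so the condition $|\zeta|=r$ becomes $|\omega-1|^2=r^2\,|A-B\omega|^2$. Expanding both sides and collecting the $|\omega|^2$ and $(\omega+\bar\omega)$ terms yields, after dividing through by $1-B^2r^2>0$, the equation of a circle centred on the real axis at $c=(1-ABr^2)/(1-B^2r^2)$ whose radius $R$ satisfies $R^2=c^2-(1-A^2r^2)/(1-B^2r^2)$. A short simplification of the resulting numerator collapses it to $r^2(A-B)^2$, so that $R=(A-B)r/(1-B^2r^2)$, the sign being fixed by $A>B$ and $r>0$.

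Finally, since $\psi$ is a M\"obius map with no pole in $\overline{\mathbb{D}_r}$ (because $B\geq-1$ and $r<1$ force $1+B\zeta\neq0$ there), it maps $\overline{\mathbb{D}_r}$ homeomorphically onto a closed disk whose boundary is the circle just computed; and as $\psi(0)=1$ lies inside that circle, the image is exactly $\{\omega:|\omega-c|\leq R\}$. Combining this with $p(z)\in\psi(\overline{\mathbb{D}_r})$ gives the asserted inequality. The argument is elementary throughout, so the only point demanding care is the sign and positivity bookkeeping --- confirming $1-B^2r^2>0$ and $A-B>0$ in order to extract $R$ cleanly, and checking that the degenerate affine case $B=0$, where $\psi$ is linear, is still covered by the same formulas.
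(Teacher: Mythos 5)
Your proof is correct. Note, however, that the paper itself offers no proof of this statement: it is imported verbatim from Ravichandran--R{\o}nning--Shanmugam (Lemma 2.1 of the cited reference \cite{Ravi-Ronning-1997-Complex-Variables}), so there is no internal argument to compare yours against. What you have written is essentially the classical proof underlying that cited result: the definition of subordination supplies the Schwarz function $w$ with $p=\psi\circ w$, the Schwarz lemma gives $|w(z)|\leq r$ on $|z|=r$, and the rest is the elementary computation that the M\"obius map $\psi(\zeta)=(1+A\zeta)/(1+B\zeta)$ carries $\overline{\mathbb{D}}_r$ onto the closed disk with center $(1-ABr^2)/(1-B^2r^2)$ and radius $(A-B)r/(1-B^2r^2)$. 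Your algebra checks out: inverting $\psi$, the circle condition $|\omega-1|^2=r^2|A-B\omega|^2$ reduces, after division by $1-B^2r^2>0$, to a circle with the stated center, and the radius computation collapses the numerator $(1-ABr^2)^2-(1-A^2r^2)(1-B^2r^2)$ to $r^2(A-B)^2$ as you claim. The pole check ($|-1/B|\geq 1>r$ when $B\neq 0$) and the remark that the affine case $B=0$ is covered by the same formulas close the remaining gaps, so the argument is complete and self-contained.
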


\begin{theorem}\label{Theorem-Janowski-Class-RRN}
	\begin{enumerate}[{\rm(i)}]
\item
	Let $0\leq{B}<A\leq1$. Then the $\mathcal{S}^*_{Ne}$-radius for $\mathcal{S}^*[A,B]$ is given by
	\begin{align*}
	{R}_{\mathcal{S}^*_{Ne}}\left(\mathcal{S}^*[A,B]\right)=\min\left\{1,\frac{2}{3A-B}\right\}.
	\end{align*}
	In particular, if $1-B\leq3(1-A)$, then $\mathcal{S}^*[A,B]\subset\mathcal{S}^*_{Ne}$.
\item
	Let $-1\leq{B}<A\leq1$ with $B\leq0$. Then the  $\mathcal{S}^*_{Ne}$-radius for $\mathcal{S}^*[A,B]$ is given by
	\begin{align*}
	{R}_{\mathcal{S}^*_{Ne}}\left(\mathcal{S}^*[A,B]\right)=\min\left\{1,\frac{2}{3A-5B}\right\}.
	\end{align*}
	In particular, if $3(1+A)\leq5(1+B)$, then $\mathcal{S}^*[A,B]\subset\mathcal{S}^*_{Ne}$.
	\end{enumerate}
\end{theorem}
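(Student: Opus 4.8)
The plan is to reduce both parts to the two lemmas already proved. Fix $f\in\mathcal{S}^*[A,B]$ and write $p=\mathcal{Q}_f$, so that $p\in\mathcal{P}[A,B]$. By the Ravi--R\o nning estimate (Lemma~2.1), for $|z|=r<1$ the value $p(z)$ is confined to the closed disk centred at the real point $c(r):=\frac{1-ABr^2}{1-B^2r^2}$ with radius $R(r):=\frac{(A-B)r}{1-B^2r^2}$. On the other hand, \Cref{Lemma-Radius-ra-NeP} tells us that a disk about a real centre $a\in(1/3,5/3)$ of radius $r_a$ lies inside $\Omega_{Ne}$. Hence $\mathcal{Q}_f(\mathbb{D}_r)\subset\Omega_{Ne}$ will follow as soon as $R(r)\le r_{c(r)}$, and the whole problem collapses to finding the largest $r$ for which this single scalar inequality holds.

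The first thing I would settle is the position of the centre. A direct computation gives $c(r)-1=\frac{B(B-A)r^2}{1-B^2r^2}$; since $B-A<0$, the sign of $c(r)-1$ is that of $-B$. Thus in case (i) ($B\ge0$) we have $c(r)\le1$ and must take $r_{c(r)}=c(r)-\tfrac13$, whereas in case (ii) ($B\le0$) we have $c(r)\ge1$ and must take $r_{c(r)}=\tfrac53-c(r)$; in either regime the very inequality $R(r)\le r_{c(r)}$ forces $c(r)$ into $(1/3,5/3)$, so the hypothesis of \Cref{Lemma-Radius-ra-NeP} is met over the whole working range. Substituting and clearing the positive denominator $1-B^2r^2$ turns $R(r)\le r_{c(r)}$ into a quadratic inequality in $r$: in case (i),
\begin{align*}
B(3A-B)\,r^2+3(A-B)\,r-2\le0,
\end{align*}
and in case (ii),
\begin{align*}
B(5B-3A)\,r^2+3(A-B)\,r-2\le0.
\end{align*}
In both quadratics the leading coefficient is $\ge0$ and the constant term is $-2<0$, so the left-hand side is negative at $r=0$ and has exactly one positive root; a short verification identifies those roots as $\tfrac{2}{3A-B}$ and $\tfrac{2}{3A-5B}$ respectively (both denominators being positive under the stated sign conditions on $A,B$). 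Intersecting with the constraint $r<1$ then yields $\rho=\min\{1,\tfrac{2}{3A-B}\}$ in case (i) and $\rho=\min\{1,\tfrac{2}{3A-5B}\}$ in case (ii).

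For sharpness I would take the Janowski extremal function $f_0$ determined by $\mathcal{Q}_{f_0}(z)=\frac{1+Az}{1+Bz}$, namely $f_0(z)=z(1+Bz)^{(A-B)/B}$ (and $f_0(z)=ze^{Az}$ when $B=0$). The Ravi--R\o nning disk is attained by this $p$ at the real endpoints $z=\pm r$, where $p(r)=\frac{1+Ar}{1+Br}$ and $p(-r)=\frac{1-Ar}{1-Br}$ are exactly the right- and left-most points of that disk. At $r=\rho$ the governing inequality becomes an equality, so in case (i) the left endpoint $\mathcal{Q}_{f_0}(-\rho)=c(\rho)-R(\rho)=\tfrac13$ lands on the left cusp of the nephroid, while in case (ii) the right endpoint $\mathcal{Q}_{f_0}(\rho)=c(\rho)+R(\rho)=\tfrac53$ lands on the right cusp; since $\tfrac13,\tfrac53\in\partial\Omega_{Ne}$, this exhibits $\mathcal{Q}_{f_0}(\partial\mathbb{D}_\rho)\cap\partial\Omega_{Ne}\neq\emptyset$ and shows $\rho$ is best possible. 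The two ``in particular'' inclusions are then just the case $\rho=1$: one checks $\tfrac{2}{3A-B}\ge1\iff 1-B\le3(1-A)$ and $\tfrac{2}{3A-5B}\ge1\iff 3(1+A)\le5(1+B)$, and when $\rho=1$ the containment $\mathcal{Q}_f(\mathbb{D})\subset\Omega_{Ne}$ holds for every admissible $f$, i.e. $\mathcal{S}^*[A,B]\subset\mathcal{S}^*_{Ne}$.

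The main obstacle I anticipate is bookkeeping rather than conceptual: one must correctly pin down which branch of $r_a$ applies — this is precisely where the sign of $B$ splits the statement into its two parts — and then confirm that the relevant \emph{positive} root of each quadratic really is the transition point while the centre $c(r)$ genuinely remains in $(1/3,5/3)$ throughout $r\le\rho$, so that \Cref{Lemma-Radius-ra-NeP} is legitimately applicable at every step of the argument.
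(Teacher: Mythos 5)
Your proposal is correct and follows essentially the same route as the paper: apply the Ravi--R\o nning disk estimate for $\mathcal{P}[A,B]$, split on the sign of $B$ to decide which branch $r_a=a-\tfrac13$ or $r_a=\tfrac53-a$ of \Cref{Lemma-Radius-ra-NeP} governs, reduce to the quadratic inequalities with roots $\tfrac{2}{3A-B}$ and $\tfrac{2}{3A-5B}$, and exhibit sharpness with the same extremal function $f_0$ hitting $\tfrac13$ (resp. $\tfrac53$) on $\partial\Omega_{Ne}$. Your explicit check that the inequality itself forces the centre $c(r)$ into $(1/3,5/3)$ is a small point of care that the paper leaves implicit.
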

\begin{proof}
	\begin{enumerate}[(i):]
\item
	Let $f\in\mathcal{S}^*[A,B]$. Then $\mathcal{Q}_f\in\mathcal{P}[A,B]$, and \Cref{Lemma2.1-Ravi-Ronning-1997-P{A,B}} yields
	\begin{align}\label{Disk-Ravi-Ronning-P{A,B}-RRN}
	\left|\mathcal{Q}_f(z)-\frac{1-ABr^{2}}{1-B^2r^2}\right|\leq\frac{(A-B)r}{1-B^2r^2}.
	\end{align}
	The inequality \eqref{Disk-Ravi-Ronning-P{A,B}-RRN} represents a disk with center at ${(1-ABr^{2})}/{(1-B^2r^2)}$ and radius ${(A-B)r}/{(1-B^2r^2)}$. Since $B\geq0$,
	we have $(1-ABr^2)/(1-B^2r^2)\leq1$. Thus, by \Cref{Lemma-Radius-ra-NeP}, the disk \eqref{Disk-Ravi-Ronning-P{A,B}-RRN} lies completely inside the region $\Omega_{Ne}$ bounded by the nephroid curve \eqref{Equation-of-Nephroid} if
	\begin{align*}
	\frac{(A-B)r}{1-B^2r^2}\leq \frac{1-ABr^{2}}{1-B^2r^2}-\frac{1}{3}.
	\end{align*}
	Simplifying the above inequality, we obtain $r\leq2/(3A-B)=:\rho_{\scriptscriptstyle{J}}$. For sharpness of the estimate, consider the function
	\begin{align*}
	f_{\scriptscriptstyle{J}}(z):=
	\begin{cases}
	z\left(1+Bz\right)^{\frac{A-B}{B}}, & \text{ if } ~    B\neq0\\
	ze^{Az},                               &  \text{ if } ~  B=0.
	\end{cases}
	\end{align*}
	The function $f_{\scriptscriptstyle{J}}(z)$ satisfies $\mathcal{Q}_{f_{\scriptscriptstyle{J}}}(z)={(1+Az)}/{(1+Bz)}$, implying that $f_{\scriptscriptstyle{J}}\in\mathcal{S}^*[A,B]$. Also, for $z_0=-{2}/{(3A-B)}\in\partial\mathbb{D}_{\rho_{\scriptscriptstyle{J}}}$, we have
	\begin{align*}
	\mathcal{Q}_{f_{\scriptscriptstyle{J}}}(z_0)=1/3\in\partial\Omega_{Ne}
	\end{align*}
    i.e.,
    \begin{align*}
    \mathcal{Q}_{f_J}(\partial\mathbb{D}_{\rho_J})\cap\partial\Omega_{Ne}=\{1/3\}\neq\emptyset.
    \end{align*}	
    This proves that the result is sharp and   $f_{\scriptscriptstyle{J}}\in\mathcal{S}^*[A,B]$ is the extremal function.\\
     In particular, if $1-B\leq3(1-A)$, then $2/(3A-B)\geq1$ and hence, ${R}_{\mathcal{S}^*_{Ne}}\left(\mathcal{S}^*[A,B]\right)=1$. That is, $\mathcal{S}^*[A,B]\subset\mathcal{S}^*_{Ne}$.	
\item
	As afore, $f\in\mathcal{S}^*[A,B]$ implies the inequality \eqref{Disk-Ravi-Ronning-P{A,B}-RRN}.
	Now, $B\leq0$ implies that
	 $(1-ABr^2)/(1-B^2r^2)\geq1$. Therefore, in this case, the disk \eqref{Disk-Ravi-Ronning-P{A,B}-RRN} lies in the interior of $\Omega_{Ne}$
	if
	\begin{align*}
	\frac{(A-B)r}{1-B^2r^2}\leq \frac{5}{3}-\frac{1-ABr^{2}}{1-B^2r^2}.
	\end{align*}
	Solving, we get $r\leq2/(3A-5B)$. Choosing $z_0=2/(3A-5B)$ and $f_{\scriptscriptstyle{J}}\in\mathcal{S}^*[A,B]$ defined above, it can be easily verified that
	$\mathcal{Q}_{f_{\scriptscriptstyle{J}}}(z_0)=5/3\in\partial\Omega_{Ne}$.
		Hence, the result is sharp.
\qedhere
	\end{enumerate}
\end{proof}
Let $0\leq{B}<A\leq1$. Using a result proved by Wani and Swaminathan \cite{Wani-Swami-Nephroid-Basic}, the following remark also shows that $1-B\leq3(1-A)$ is indeed a sufficient condition for the inclusion $\mathcal{S}^*[A,B]\subset\mathcal{S}^*_{Ne}$.

\begin{remark}
	Since, $1-B\leq3(1-A)$ implies $1-B^2 \leq 3(1-AB+(B-A))<3(1-AB)$ and,
	$0\leq{B}<A\leq1$ gives $3(1-AB)\leq3(1-B^2)$. Combining, we obtain $1-B^2<3(1-AB)\leq3(1-B^2)$. Now the inclusion relation  $\mathcal{S}^*[A,B]\subset\mathcal{S}^*_{Ne}$ follows from {\rm\cite[Theorem 3.3]{Wani-Swami-Nephroid-Basic}}.
\end{remark}

Specializing the constants $A$ and $B$ in \Cref{Theorem-Janowski-Class-RRN}, the following important sharp radii results are obtained.

\begin{corollary}
The sharp $\mathcal{S}^*_{Ne}$-radius for $\mathcal{S}^*(\alpha)=\mathcal{S}^*[1-2\alpha,-1]$ is ${2}/{(3(1-2\alpha)+5)}$. The estimate is sharp for the function $k_{\alpha}(z)=z(1-z)^{2\alpha-2}$, $0\leq\alpha<1$.
\end{corollary}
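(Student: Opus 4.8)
The plan is to recognize this corollary as a direct specialization of \Cref{Theorem-Janowski-Class-RRN}(ii), so almost all the work has already been done. First I would record the parameter identification: since $\mathcal{S}^*(\alpha)=\mathcal{S}^*\left(\tfrac{1+(1-2\alpha)z}{1-z}\right)$ and $\mathcal{S}^*[A,B]=\mathcal{S}^*\left(\tfrac{1+Az}{1+Bz}\right)$, the class $\mathcal{S}^*(\alpha)$ corresponds exactly to $A=1-2\alpha$ and $B=-1$. I would then verify that these values satisfy the hypotheses of part (ii), namely $-1\leq B<A\leq1$ with $B\leq0$: here $B=-1\leq0$, and the condition $B<A$ amounts to $-1<1-2\alpha$, i.e.\ $\alpha<1$, which holds on the stated range $0\leq\alpha<1$.

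Next I would substitute into the formula from \Cref{Theorem-Janowski-Class-RRN}(ii), computing
\begin{align*}
3A-5B=3(1-2\alpha)-5(-1)=3(1-2\alpha)+5=8-6\alpha,
\end{align*}
so that $\tfrac{2}{3A-5B}=\tfrac{2}{3(1-2\alpha)+5}$. The only genuine check is to confirm that the minimum in $\min\{1,\,2/(3A-5B)\}$ is attained by the fractional term rather than by $1$. Since $8-6\alpha>2$ for all $0\leq\alpha<1$, we have $\tfrac{2}{8-6\alpha}<1$, and therefore ${R}_{\mathcal{S}^*_{Ne}}(\mathcal{S}^*(\alpha))=\tfrac{2}{3(1-2\alpha)+5}$, with sharpness inherited verbatim from the theorem.

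Finally, to identify the extremal function I would evaluate the Janowski extremal $f_{\scriptscriptstyle{J}}(z)=z(1+Bz)^{(A-B)/B}$ at $A=1-2\alpha$, $B=-1$. One gets $1+Bz=1-z$ and the exponent $\tfrac{A-B}{B}=\tfrac{(1-2\alpha)+1}{-1}=2\alpha-2$, yielding $f_{\scriptscriptstyle{J}}(z)=z(1-z)^{2\alpha-2}=k_{\alpha}(z)$, as claimed. I do not anticipate any real obstacle here: the entire argument is a substitution into an already-proved sharp result, and the sharpness assertion transfers immediately because the extremal configuration $z_0$, $\mathcal{Q}_{f_{\scriptscriptstyle{J}}}(z_0)=5/3\in\partial\Omega_{Ne}$ from the theorem specializes to these parameter values without modification.
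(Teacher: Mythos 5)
Your proposal is correct and follows exactly the paper's route: the paper obtains this corollary by specializing $A=1-2\alpha$, $B=-1$ in \Cref{Theorem-Janowski-Class-RRN}(ii), which is precisely your substitution, including the check that $2/(8-6\alpha)<1$ and the identification $f_{\scriptscriptstyle{J}}(z)=z(1-z)^{2\alpha-2}=k_\alpha(z)$. Nothing is missing; the sharpness indeed transfers verbatim since $\mathcal{Q}_{f_{\scriptscriptstyle{J}}}(z_0)=5/3\in\partial\Omega_{Ne}$ at $z_0=2/(3(1-2\alpha)+5)$.
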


\begin{corollary}
The sharp $\mathcal{S}^*_{Ne}$-radius for the class of starlike functions $\mathcal{S}^*$ is ${1}/{4}$, and the sharpness holds for the well-known Koebe function $k(z):=z/(1-z)^{2}$.
\end{corollary}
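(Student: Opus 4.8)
The plan is to realize the starlike class $\mathcal{S}^*$ as a distinguished Janowski class and then read off the result from the second branch of \Cref{Theorem-Janowski-Class-RRN}. Since $f\in\mathcal{S}^*$ is characterized by $\mathcal{Q}_f(z)\prec(1+z)/(1-z)$, we have $\mathcal{S}^*=\mathcal{S}^*[1,-1]$, corresponding to the parameter choice $A=1$ and $B=-1$. Because $B=-1\leq0$, part~(ii) of the theorem applies directly, and the entire corollary reduces to a substitution into the formula proved there.

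First I would insert $A=1$ and $B=-1$ into the radius expression of part~(ii). This gives $3A-5B=3+5=8$, so that $2/(3A-5B)=1/4$, and therefore
\begin{align*}
{R}_{\mathcal{S}^*_{Ne}}\left(\mathcal{S}^*\right)=\min\left\{1,\tfrac{2}{3A-5B}\right\}=\min\left\{1,\tfrac{1}{4}\right\}=\frac{1}{4},
\end{align*}
as claimed. Next I would confirm that the extremal function of part~(ii) collapses to the Koebe function: with $B=-1\neq0$, the function $f_{\scriptscriptstyle{J}}$ from \Cref{Theorem-Janowski-Class-RRN} becomes
\begin{align*}
f_{\scriptscriptstyle{J}}(z)=z\left(1+Bz\right)^{(A-B)/B}=z(1-z)^{2/(-1)}=\frac{z}{(1-z)^2}=k(z).
\end{align*}
Since part~(ii) already established $\mathcal{Q}_{f_{\scriptscriptstyle{J}}}(z_0)=5/3\in\partial\Omega_{Ne}$ for $z_0=2/(3A-5B)=1/4\in\partial\mathbb{D}_{1/4}$, the sharpness is inherited immediately by $k(z)$.

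There is essentially no genuine obstacle here, as the corollary is a pure specialization; the only point demanding a little care is the sign bookkeeping in the exponent $(A-B)/B$, where the negative value of $B$ must be tracked correctly to verify that $f_{\scriptscriptstyle{J}}$ indeed reduces to $z/(1-z)^2$ rather than to some other branch.
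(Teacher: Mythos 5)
Your proposal is correct and matches the paper's (implicit) argument exactly: the paper also obtains this corollary by specializing part (ii) of \Cref{Theorem-Janowski-Class-RRN} to $\mathcal{S}^*=\mathcal{S}^*[1,-1]$, giving $2/(3A-5B)=2/8=1/4$, with the extremal function $f_{\scriptscriptstyle{J}}(z)=z(1+Bz)^{(A-B)/B}$ reducing to the Koebe function and sharpness witnessed by $\mathcal{Q}_{k}(1/4)=5/3\in\partial\Omega_{Ne}$. Nothing is missing; the sign bookkeeping in the exponent is handled correctly.
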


\begin{corollary}
The sharp $\mathcal{S}^*_{Ne}$-radius for the convex class $\mathcal{C}$ is ${2}/{5}$.
\end{corollary}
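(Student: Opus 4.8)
The plan is to reduce this to the starlike-of-order-$1/2$ case already handled above, via the classical Marx--Strohh\"acker inclusion $\mathcal{C}\subset\mathcal{S}^*(1/2)$, and then to produce a genuinely convex extremal function realising the bound.

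First I would invoke that every convex function is starlike of order $1/2$, so that $\mathcal{C}\subset\mathcal{S}^*(1/2)=\mathcal{S}^*[0,-1]$. The $\mathcal{S}^*_{Ne}$-radius of the right-hand class is already known to equal $2/5$: this is the first corollary above with $\alpha=1/2$, equivalently part~(ii) of \Cref{Theorem-Janowski-Class-RRN} with $A=0$ and $B=-1$, for which $2/(3A-5B)=2/5$. Since the radius can only grow when one passes to a subclass, the inclusion immediately gives
\begin{align*}
R_{\mathcal{S}^*_{Ne}}(\mathcal{C})\geq R_{\mathcal{S}^*_{Ne}}\big(\mathcal{S}^*(1/2)\big)=\frac{2}{5}.
\end{align*}
For completeness I would record the underlying mechanism directly: for $f\in\mathcal{C}$ one has $\mathcal{Q}_f\in\mathcal{P}[0,-1]$, so \Cref{Lemma2.1-Ravi-Ronning-1997-P{A,B}} confines $\mathcal{Q}_f(z)$ for $|z|=r$ to the disk centred at $1/(1-r^2)\geq1$ with radius $r/(1-r^2)$; the $a\geq1$ branch of \Cref{Lemma-Radius-ra-NeP} places this disk inside $\Omega_{Ne}$ precisely when $r/(1-r^2)\leq 5/3-1/(1-r^2)$, which collapses to $1/(1-r)\leq 5/3$, i.e.\ $r\leq 2/5$.

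For the reverse inequality (sharpness) I would take the half-plane mapping $f_0(z)=z/(1-z)$, which is convex and therefore lies in $\mathcal{C}$. A one-line computation gives $\mathcal{Q}_{f_0}(z)=1/(1-z)$, so at the boundary point $z_0=2/5$ we obtain $\mathcal{Q}_{f_0}(z_0)=5/3=\varphi_{\scriptscriptstyle{Ne}}(1)\in\partial\Omega_{Ne}$, the right cusp of the nephroid. Thus $\mathcal{Q}_{f_0}(\partial\mathbb{D}_{2/5})\cap\partial\Omega_{Ne}=\{5/3\}\neq\emptyset$, and for any $r>2/5$ the real value $\mathcal{Q}_{f_0}(r)=1/(1-r)>5/3$ already leaves $\Omega_{Ne}$; hence the radius cannot exceed $2/5$, and combined with the lower bound this yields $R_{\mathcal{S}^*_{Ne}}(\mathcal{C})=2/5$.

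There is essentially no hard step once the reduction is set up: the whole argument rests on the Marx--Strohh\"acker inclusion, after which the lower bound is inherited from the Janowski computation and sharpness is witnessed by the standard convex extremal $z/(1-z)$. The one point requiring care is conceptual rather than computational---the inclusion alone gives only $R\geq 2/5$, so one must separately check that the extremal function can be chosen \emph{inside} $\mathcal{C}$ (and not merely inside the larger class $\mathcal{S}^*(1/2)$); fortunately $z/(1-z)$ is convex and simultaneously attains the nephroid cusp value $5/3$, which closes the gap.
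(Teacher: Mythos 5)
Your proposal is correct and follows essentially the same route as the paper: the Marx--Strohh\"acker inclusion $\mathcal{C}\subset\mathcal{S}^*(1/2)=\mathcal{S}^*[0,-1]$ combined with \Cref{Theorem-Janowski-Class-RRN} for the lower bound, and the convex function $z/(1-z)$ attaining the boundary value $5/3$ at $z_0=2/5$ for sharpness. The extra remarks (the explicit disk computation and the observation that $1/(1-r)>5/3$ for $r>2/5$) are sound but merely elaborate what the paper leaves implicit.
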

\begin{proof}
From \Cref{Theorem-Janowski-Class-RRN}, the $\mathcal{S}^*_{Ne}$-radius for $\mathcal{S}^*\left({1}/{2}\right)=\mathcal{S}^*[0,-1]$ is ${2}/{5}$. Using the fact that $\mathcal{C}\subset\mathcal{S}^*\left({1}/{2}\right)$ (see \cite[Theorem 2.6a, p. 57]{Miller-Mocanu-Book-2000-Diff-Sub}), it follows that ${R}_{\mathcal{S}^*_{Ne}}(\mathcal{C})\geq2/5$. Further, at the point $z_0=2/5\in\partial\mathbb{D}_{2/5}$, the function $h(z):=z/(1-z)\in\mathcal{C}$ satisfies
\begin{align*}
\mathcal{Q}_h(z_0)={1}/{(1-2/5)}={5}/{3}\in\partial\Omega_{Ne},
\end{align*}
showing that ${R}_{\mathcal{S}^*_{Ne}}(\mathcal{C})\leq2/5$. Hence, ${R}_{\mathcal{S}^*_{Ne}}(\mathcal{C})=2/5$, and the estimate is sharp for the function $h(z)$.
\end{proof}

Before proving the next result, we note that $\mathcal{S}^*_{L}(\alpha)\subset\mathcal{S}^*_{Ne}$ for $\alpha\geq1/3$, and $\mathcal{S}^*_{\alpha,e}\subset\mathcal{S}^*_{Ne}$ for $\alpha\geq{(3e-5)}/{(3e-3)}$, (see Wani and Swaminathan \cite[Theorem 3.1]{Wani-Swami-Nephroid-Basic}). Therefore
\begin{align*}
{R}_{\mathcal{S}^*_{Ne}}\left(\mathcal{S}^*_{L}(\alpha)\right)=1,  \mbox{ whenever } \alpha\geq1/3
\end{align*}
and
\begin{align*}
{R}_{\mathcal{S}^*_{Ne}}\left(\mathcal{S}^*_{\alpha,e}\right)=1,  \mbox{ whenever } \alpha\geq{(3e-5)}/{(3e-3)}.
\end{align*}

\begin{theorem}
	For the function classes
	$\mathcal{BS}^*(\alpha)$,
	$\mathcal{S}^*_{L}(\alpha)$,
	and
	$\mathcal{S}^*_{\alpha,e}$,
	the following radius results hold:
\begin{enumerate}[{\rm(i)}]
\item
	For $\alpha\in[0,1)$, ${R}_{\mathcal{S}^*_{Ne}}\left(\mathcal{BS}^*(\alpha)\right)=\rho_{\scriptscriptstyle{\mathcal{B}}}(\alpha):={4}/{(3+\sqrt{9+16\alpha})}$.
\item
	For $\alpha\in[0,1/3]$, ${R}_{\mathcal{S}^*_{Ne}}\left(\mathcal{S}^*_{L}(\alpha)\right)=\rho_{\scriptscriptstyle{L}}(\alpha):={4(2-3\alpha)}/{9(1-\alpha)^2}$. In particular, ${R}_{\mathcal{S}^*_{Ne}}\left(\mathcal{S}^*_{L}\right)=\rho_{\scriptscriptstyle{L}}:={8}/{9}$.
\item
	For $\alpha\in[0,(3e-5)/(3e-3)]$,
	${R}_{\mathcal{S}^*_{Ne}}\left(\mathcal{S}^*_{\alpha,e}\right)=\rho_{\scriptscriptstyle{e}}(\alpha):=
	\log\left({(5-3\alpha)}/{(3-3\alpha)}\right)$.
	In particular, ${R}_{\mathcal{S}^*_{Ne}}\left(\mathcal{S}^*_{e}\right)=\rho_{\scriptscriptstyle{e}}:=\log\left({5}/{3}\right)\approx0.510826$.

\end{enumerate}
Each of the estimates is best possible.
\end{theorem}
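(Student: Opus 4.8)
The plan is to handle all three parts by a single uniform reduction built on \Cref{Lemma-Radius-ra-NeP}. If $f$ lies in any of the three classes then $\mathcal{Q}_f\prec\varphi$, where $\varphi$ is the associated Ma--Minda function ($1+z/(1-\alpha z^2)$, $\alpha+(1-\alpha)\sqrt{1+z}$, or $\alpha+(1-\alpha)e^z$), so subordination gives $\mathcal{Q}_f(\mathbb{D}_r)\subseteq\varphi(\mathbb{D}_r)$ for every $r$. Since $\varphi(0)=1$ and, by \Cref{Lemma-Radius-ra-NeP} with $a=1$, the largest disk centred at $1$ that lies inside $\Omega_{Ne}$ has radius $r_1=\min\{1-1/3,\,5/3-1\}=2/3$, I would first reduce everything to the scalar problem of finding the largest $r$ for which $\varphi(\overline{\mathbb{D}_r})\subseteq\{w:|w-1|\le 2/3\}$, i.e. the largest $r$ with $M(r):=\max_{|z|\le r}|\varphi(z)-1|\le 2/3$. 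For such $r$ the image disk, and hence $\mathcal{Q}_f(\mathbb{D}_r)$, sits inside $\Omega_{Ne}$, giving the lower bound for each radius; note that the two extreme values $1\pm 2/3$ are precisely the cusps $1/3$ and $5/3$ of the nephroid.

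The next step is to evaluate $M(r)$ from the Taylor expansion of $\varphi(z)-1$, using $\bigl|\sum_n a_n z^n\bigr|\le\sum_n|a_n|r^n$ with equality for real $z$ of the appropriate sign. For $\mathcal{BS}^*(\alpha)$ one has $\varphi(z)-1=z/(1-\alpha z^2)=\sum_{k\ge0}\alpha^k z^{2k+1}$, all coefficients nonnegative, so $M(r)=r/(1-\alpha r^2)$ attained at $z=r$; solving $r/(1-\alpha r^2)=2/3$ gives $\rho_{\scriptscriptstyle{\mathcal{B}}}(\alpha)=4/(3+\sqrt{9+16\alpha})$. For $\mathcal{S}^*_{\alpha,e}$ the series $(1-\alpha)\sum_{n\ge1}z^n/n!$ again has nonnegative coefficients, so $M(r)=(1-\alpha)(e^{r}-1)$ at $z=r$, and $M(r)=2/3$ yields $\rho_{\scriptscriptstyle{e}}(\alpha)=\log((5-3\alpha)/(3-3\alpha))$. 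For $\mathcal{S}^*_{L}(\alpha)$ the coefficients of $(1-\alpha)(\sqrt{1+z}-1)$ alternate in sign, so the maximum modulus is now attained at $z=-r$, giving $M(r)=(1-\alpha)(1-\sqrt{1-r})$; imposing $M(r)=2/3$ and simplifying produces $\rho_{\scriptscriptstyle{L}}(\alpha)=4(2-3\alpha)/\bigl(9(1-\alpha)^2\bigr)$, and $\rho_{\scriptscriptstyle{L}}=8/9$ at $\alpha=0$.

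For sharpness I would use the Ma--Minda extremal function $f_0(z)=z\exp\int_0^z(\varphi(t)-1)/t\,dt$, which satisfies $\mathcal{Q}_{f_0}=\varphi$ and therefore belongs to the class. Evaluating $\mathcal{Q}_{f_0}$ at the point where $M$ is attained ($z_0=\rho$ for $\mathcal{BS}^*(\alpha)$ and $\mathcal{S}^*_{\alpha,e}$, and $z_0=-\rho$ for $\mathcal{S}^*_L(\alpha)$) gives $\mathcal{Q}_{f_0}(z_0)=5/3$ or $1/3$ respectively, a cusp of the nephroid on $\partial\Omega_{Ne}$; thus $\mathcal{Q}_{f_0}(\partial\mathbb{D}_\rho)\cap\partial\Omega_{Ne}\neq\emptyset$, while for $r>\rho$ the value slips out of $\Omega_{Ne}$ along the real axis, so the radius is at most $\rho$. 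Combined with the lower bound, each $\rho$ is best possible.

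The hard part will be the exact evaluation of $M(r)$: everything rests on showing that the extremal modulus is attained on the real axis and on correctly tracking the coefficient signs---nonnegative for the Booth and exponential generators (forcing the right cusp $5/3$), but alternating for $\sqrt{1+z}$ (forcing the left cusp $1/3$). I would also need to respect the stated parameter ranges, since once $\alpha\ge1/3$ (respectively $\alpha\ge(3e-5)/(3e-3)$) the inequality $M(r)\le2/3$ holds for all $r<1$; this is exactly the inclusion $\mathcal{S}^*_{L}(\alpha)\subset\mathcal{S}^*_{Ne}$ (respectively $\mathcal{S}^*_{\alpha,e}\subset\mathcal{S}^*_{Ne}$) noted before the theorem, so the closed-form radii $\rho_{\scriptscriptstyle{L}}(\alpha)$ and $\rho_{\scriptscriptstyle{e}}(\alpha)$ are meaningful only on the indicated intervals.
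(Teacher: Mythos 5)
Your proposal is correct and follows essentially the same route as the paper: subordination reduces each class to bounding $|\mathcal{Q}_f(z)-1|$ by its maximum on the real axis (which is $r/(1-\alpha r^2)$, $(1-\alpha)(1-\sqrt{1-r})$, and $(1-\alpha)(e^r-1)$ respectively), then \Cref{Lemma-Radius-ra-NeP} with $a=1$ turns each radius problem into the scalar inequality $M(r)\le 2/3$, and sharpness is shown with the Ma--Minda extremal functions $f_0$ satisfying $\mathcal{Q}_{f_0}=\varphi$ evaluated at $z=\pm\rho$, hitting the cusps $1/3$ or $5/3$ of the nephroid. The only cosmetic difference is that you organize the three cases as one uniform coefficient-sign argument, whereas the paper carries out the same modulus estimates case by case.
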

\begin{proof}
	\begin{enumerate}[(i):]
\item
		Let $f\in\mathcal{BS}^*(\alpha)$. Then $\mathcal{Q}_{f}(z)\prec G_{\alpha}(z):=1+z/(1-\alpha z^2)$ and hence, for $|z|=r$, we have
		\begin{align} \label{Disk-Booth-Lem-RRN}
		\left|\mathcal{Q}_{f}(z)-1\right|\leq \left|{z}/{(1-\alpha z^2)}\right|\leq {r}/{(1-\alpha r^2)}.
		\end{align}
		In view of \Cref{Lemma-Radius-ra-NeP}, the disk \eqref{Disk-Booth-Lem-RRN} lies completely inside the nephroid region $\Omega_{Ne}$ if
		$r/(1-\alpha r^2)\leq2/3$. This gives $r\leq\rho_{\scriptscriptstyle{\mathcal{B}}}(\alpha)$. For sharpness, consider the function
		\begin{align*}
		f_{\scriptscriptstyle{\mathcal{B}}}(z)=
		\begin{cases}
	       z \left(\frac{1+\sqrt{\alpha}z}{1-\sqrt{\alpha}z}\right)^{1/(2\sqrt{\alpha})},& \quad \alpha\in(0,1) \\
	       z e^z,    &  \quad \alpha=0.
		\end{cases}
		\end{align*}
		It is easy to verify that $\mathcal{Q}_{f_{\scriptscriptstyle{\mathcal{B}}}}(z)=G_{\alpha}(z)$, and hence $f_{\scriptscriptstyle{\mathcal{B}}}\in\mathcal{BS}^*(\alpha)$. Also, a straightforward calculation shows that
		\begin{align*}
		\mathcal{Q}_{f_{\scriptscriptstyle{\mathcal{B}}}}(z_0)=1/3\in\partial\Omega_{Ne}, \text{ for } z_0=-\rho_{\scriptscriptstyle{\mathcal{B}}}(\alpha),
		\end{align*}
		and
		\begin{align*}
		\mathcal{Q}_{f_{\scriptscriptstyle{\mathcal{B}}}}(z_1)=5/3\in\partial\Omega_{Ne}, \text{ for } z_1=\rho_{\scriptscriptstyle{\mathcal{B}}}(\alpha),
		\end{align*}
\begin{figure}[H]
	\begin{subfigure}{0.45\textwidth}
		\centering
		\includegraphics[scale=.75]{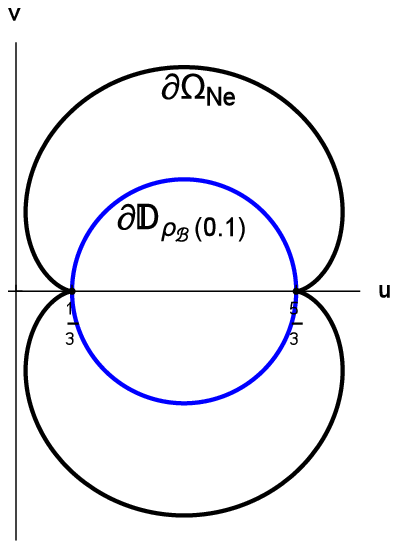}
		\caption{}
	\end{subfigure}
	\begin{subfigure}{0.45\textwidth}
		\centering
		\includegraphics[scale=.75]{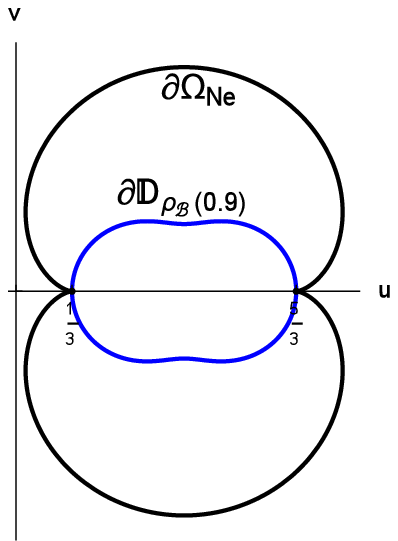}
		\caption{}
	\end{subfigure}
	\caption{Sharpness of $\rho_{\scriptscriptstyle{\mathcal{B}}}(\alpha)$, where (a) $\alpha=0.1$ and (b) $\alpha=0.9$}
	\label{Figure-Sharpness-Booth-Lem-RRN}
\end{figure}
Therefore,
\begin{align*}
\mathcal{Q}_{f_{\scriptscriptstyle{\mathcal{B}}}}\left(\partial\mathbb{D}_{\rho_{\scriptscriptstyle{\mathcal{B}}}(\alpha)}\right)\cap\partial\Omega_{Ne}=\{1/3,5/3\}\neq\emptyset.
\end{align*}		
This proves that the estimate $\rho_{\scriptscriptstyle{\mathcal{B}}}(\alpha)$ is best possible.  \Cref{Figure-Sharpness-Booth-Lem-RRN} depicts the sharpness of $\rho_{\scriptscriptstyle{\mathcal{B}}}(\alpha)$ for different values of $\alpha$.		
\item
		Let $f\in\mathcal{S}^*_L(\alpha)$. Then $\mathcal{Q}_{f}(z)\prec\alpha+(1-\alpha)\sqrt{1+z}$ and hence
		\begin{align*}
		\left|\mathcal{Q}_{f}(z)-1\right| &\leq \left|\alpha+(1-\alpha)\sqrt{1+z}-1\right|\\
		&= (1-\alpha)\left|1-\sqrt{1+z}\right|\\
		&\leq (1-\alpha)\left(1-\sqrt{1-r}\right),  \quad |z|=r.
		\end{align*}
		An application of \Cref{Lemma-Radius-ra-NeP} shows that $f\in\mathcal{S}^*_{Ne}$ if $(1-\alpha)\left(1-\sqrt{1-r}\right)\leq2/3$, which on simplification gives $r\leq\rho_{\scriptscriptstyle{L}}(\alpha)$. Verify that the function
		\begin{align*}
		f_{\scriptscriptstyle{L}}(z)=z+(1-\alpha)z^2+\frac{1}{16}(1-\alpha)(1-2\alpha)z^3+\cdots
		\end{align*}
		satisfies $\mathcal{Q}_{f_{\scriptscriptstyle{L}}}(z)=\alpha+(1-\alpha)\sqrt{1+z}$, and hence is a member of $\mathcal{S}_L^*(\alpha)$. Moreover,
			\begin{align*}
		\mathcal{Q}_{f_{\scriptscriptstyle{L}}}(z_0)=1/3\in\partial\Omega_{Ne}, ~~ \text{ for } z_0=-\rho_{\scriptscriptstyle{L}}(\alpha)\in\partial\mathbb{D}_{\rho_{\scriptscriptstyle{L}}(\alpha)}.
		\end{align*}
\begin{figure}[H]
	\begin{subfigure}{0.45\textwidth}
		\centering
		\includegraphics[scale=1.5]{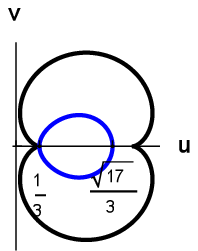}
		\caption{$\alpha=0$}
	\end{subfigure}
	\begin{subfigure}{0.45\textwidth}
		\centering
		\includegraphics[scale=1.5]{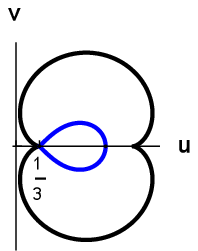}
		\caption{$\alpha=1/3$}
	\end{subfigure}
	\caption{Sharpness of $\rho_{\scriptscriptstyle{L}}(\alpha)$}
	\label{Figure-Sharpness-GenLem-RRN}
\end{figure}
\item
		$f\in\mathcal{S}^*_{\alpha,e}$ implies $\mathcal{Q}_{f}(z)\prec\alpha+(1-\alpha)e^z$. For $|z|=r$, this subordination yields
		\begin{align*}
		\left|\mathcal{Q}_{f}(z)-1\right| 
		\leq (1-\alpha)\left|e^z-1\right|
		\leq (1-\alpha)\left( e^r-1 \right)
		\leq {2}/{3}
		\end{align*}
		if $r\leq\log\left({(5-3\alpha)}/{(3-3\alpha)}\right)=\rho_{\scriptscriptstyle{e}}(\alpha)$. Therefore, $f\in\mathcal{S}^*_{Ne}$ if $|z|=r\leq\rho_{\scriptscriptstyle{e}}(\alpha)$. The result is sharp for the function $f_{\scriptscriptstyle{e}}\in\mathcal{S}^*_{\alpha,e}$ given by
		\begin{align*}
		f_{\scriptscriptstyle{e}}(z)=z+(1-\alpha)z^2+\frac{1}{4}(1-\alpha)(3-2\alpha)z^3+\cdots,
		\end{align*}
		and
		\begin{align*}
		 \mathcal{Q}_{f_{\scriptscriptstyle{e}}}\left(\partial\mathbb{D}_{\rho_{\scriptscriptstyle{e}}(\alpha)}\right)\cap\partial\Omega_{Ne}=\{5/3\}.
		\end{align*}
	\end{enumerate}
 \begin{figure}[H]
 	\begin{subfigure}{0.45\textwidth}
 		\centering
 		\includegraphics[scale=1.5]{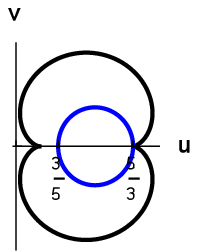}
 		\caption{$\alpha=0$}
 	\end{subfigure}
 	\begin{subfigure}{0.45\textwidth}
 		\centering
 		\includegraphics[scale=1.5]{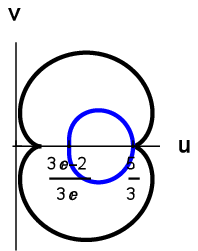}
 		\caption{$\alpha=(3e-5)/(3e-3)$}
 	\end{subfigure}
 	\caption{Sharpness of $\rho_{\scriptscriptstyle{e}}(\alpha)$}
 	\label{Figure-Sharpness-GenExpo-RRN}
 	\qedhere
 \end{figure}
\end{proof}

\begin{theorem}
	For the function classes
$\mathcal{S}^*_{RL}$,
$\mathcal{S}^*_{C}$
and
$\mathcal{S}^*_{R}$,
 we have:
\begin{enumerate}[{\rm(i)}]
	\item
	${R}_{\mathcal{S}^*_{Ne}}\left(\mathcal{S}^*_{RL}\right)=\rho_{\scriptscriptstyle{RL}}:={56}/{(122-41 \sqrt{2})}\approx0.874764$,
	\item
	${R}_{\mathcal{S}^*_{Ne}}\left(\mathcal{S}^*_{C}\right)=\rho_{\scriptscriptstyle{c}}:=\sqrt{2}-1 \approx 0.414214$,
	\item
	 ${R}_{\mathcal{S}^*_{Ne}}\left(\mathcal{S}^*_{R}\right)=\rho_{\scriptscriptstyle{R}}:={1}/{(3\sqrt{2}-3)}\approx0.804738$.
\end{enumerate}
The estimate in each part is sharp (see {\rm\Cref{Figure-Sharpness-SCR-RRN}}).
\end{theorem}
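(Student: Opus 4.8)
The plan is to reduce every part to a single geometric statement. If $f$ belongs to the class $\mathcal{S}^*(\varphi)$, then $\mathcal{Q}_f\prec\varphi$, so by Schwarz's lemma $\mathcal{Q}_f(\mathbb{D}_r)\subseteq\varphi(\overline{\mathbb{D}_r})$; hence $r^{-1}f(rz)\in\mathcal{S}^*_{Ne}$ is guaranteed as soon as $\varphi(\overline{\mathbb{D}_r})\subseteq\Omega_{Ne}$, and the extremal function is always the one with $\mathcal{Q}_{f_0}(z)=\varphi(z)$. In each case I would enclose $\varphi(\overline{\mathbb{D}_r})$ in a disk centred at $\varphi(0)=1$ and apply \Cref{Lemma-Radius-ra-NeP} with $a=1$, where $r_1=2/3$: the disk $\{w:|w-1|\le\mu(r)\}$ lies in $\Omega_{Ne}$ once $\mu(r):=\max_{|\zeta|=r}|\varphi(\zeta)-1|\le 2/3$.

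For part (ii) I would factor $\varphi_{\scriptscriptstyle{C}}(\zeta)-1=\tfrac{2}{3}\zeta(2+\zeta)$, so that $|\varphi_{\scriptscriptstyle{C}}(\zeta)-1|=\tfrac{2}{3}r\,|2+\zeta|\le\tfrac{2}{3}r(2+r)$ on $|\zeta|=r$, with equality at $\zeta=r$; imposing $\tfrac{2}{3}r(2+r)\le\tfrac{2}{3}$ gives $r^2+2r-1\le0$, i.e. $r\le\sqrt2-1=\rho_{\scriptscriptstyle{c}}$. For part (iii), writing $\varphi_{\scriptscriptstyle{0}}(\zeta)-1=\tfrac{\zeta(k+\zeta)}{k(k-\zeta)}$ with $k=\sqrt2+1$ (analytic on $\overline{\mathbb{D}}$ since $k-\zeta\neq0$), the ratio $|k+\zeta|/|k-\zeta|$ is maximised on $|\zeta|=r$ at $\zeta=r$, so $\mu(r)=\tfrac{r(k+r)}{k(k-r)}$; the inequality $\mu(r)\le2/3$ reduces to $3r^2+5kr-2k^2\le0$, whose relevant root is $r=k/3=(\sqrt2+1)/3=1/(3\sqrt2-3)=\rho_{\scriptscriptstyle{R}}$. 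In both cases the maximiser lies on the positive real axis and the extremal image meets $\partial\Omega_{Ne}$ at the right cusp $5/3$, since $\varphi_{\scriptscriptstyle{C}}(\sqrt2-1)=\varphi_{\scriptscriptstyle{0}}(k/3)=5/3$.

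Part (i) is the delicate one. The key identity is $(\varphi_{\scriptscriptstyle{RL}}(z)-\sqrt2)^2=(\sqrt2-1)^2\,T(z)$ with $T(z)=\tfrac{1-z}{1+2(\sqrt2-1)z}$, so that $\varphi_{\scriptscriptstyle{RL}}(z)-1=(\sqrt2-1)\bigl(1-\sqrt{T(z)}\bigr)$ for the principal branch. A triangle-inequality argument gives $|T(\zeta)|\le T(-r)=\tfrac{1+r}{1-2(\sqrt2-1)r}$ on $|\zeta|=r$, with equality at $\zeta=-r$, while the Möbius image $C_r$ of the circle $|\zeta|=r$ is itself a circle lying in the right half-plane with real diameter $[\,T(r),T(-r)\,]$. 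I would then claim that $\mu(r)=\max_{|\zeta|=r}|\varphi_{\scriptscriptstyle{RL}}(\zeta)-1|$ is attained at $\zeta=-r$, giving $\mu(r)=(\sqrt2-1)\bigl(\sqrt{T(-r)}-1\bigr)$. Imposing $\mu(r)\le2/3$, equivalently $\varphi_{\scriptscriptstyle{RL}}(-r)\ge1/3$, and clearing the square root yields a linear equation in $r$ that simplifies to $\rho_{\scriptscriptstyle{RL}}=56/(122-41\sqrt2)$, with the extremal image touching $\partial\Omega_{Ne}$ at the left cusp $1/3$.

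The main obstacle is exactly the claim that the farthest point of $\varphi_{\scriptscriptstyle{RL}}(\partial\mathbb{D}_r)$ from $w=1$ is $\varphi_{\scriptscriptstyle{RL}}(-r)$. The crude bound $|1-\sqrt{T}|\le1+|T|^{1/2}$ is far too lossy, because at $\zeta=-r$ both $|T|$ and $\mathrm{Re}\sqrt{T}$ are simultaneously maximal and these pull $|1-\sqrt{T}|^2=1-2\,\mathrm{Re}\sqrt{T}+|T|$ in opposite directions. I would therefore use $\mathrm{Re}\sqrt{T}=\sqrt{(|T|+\mathrm{Re}\,T)/2}$, parametrise $C_r$ as $T=c_r+R_re^{i\phi}$ (so $\phi=0$ gives $T=T(-r)$), and reduce the claim to showing that $g(\phi)=|T|-\sqrt{2(|T|+\mathrm{Re}\,T)}$ is maximised at $\phi=0$; a one-variable derivative computation in $\phi$ settles this. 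For the necessity half (sharpness when $r>\rho_{\scriptscriptstyle{RL}}$) no such work is required, since then $\varphi_{\scriptscriptstyle{RL}}(-r)<1/3$ already lies outside $\Omega_{Ne}$ on the real axis.
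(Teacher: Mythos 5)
Your proposal is correct and is essentially the paper's own argument: in each part you bound $|\mathcal{Q}_f(z)-1|$ by $\max_{|\zeta|=r}|\varphi(\zeta)-1|$, feed the resulting disk into \Cref{Lemma-Radius-ra-NeP} with $a=1$ (so $r_1=2/3$), and obtain sharpness from the function with $\mathcal{Q}_{f_0}=\varphi$, which meets $\partial\Omega_{Ne}$ at the cusp $5/3$ in (ii)--(iii) and at $1/3$ in (i); your inequalities $r^2+2r-1\le0$, $3r^2+5kr-2k^2\le0$, and the linear equation coming from $\varphi_{\scriptscriptstyle{RL}}(-r)=1/3$ are exactly the paper's.

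The one divergence is rigor in part (i), and it is in your favour: the paper merely asserts, in a typo-marred chain of inequalities, that $\max_{|\zeta|=r}|\varphi_{\scriptscriptstyle{RL}}(\zeta)-1|=1-\varphi_{\scriptscriptstyle{RL}}(-r)$, whereas you flag this as the delicate step and sketch a genuine verification. Your sketch does go through, with one refinement: writing $T=c_r+R_re^{i\phi}$ and $s=|T|$, one gets $|1-\sqrt{T}|^2=1+s-\sqrt{\left((s+c_r)^2-R_r^2\right)/c_r}$, which is \emph{convex} in $s$; so the derivative computation yields an interior minimum rather than monotonicity, and the maximum over $C_r$ sits at one of the two real points $T(\pm r)$. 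One then finishes by checking $\sqrt{T(-r)}-1\ge 1-\sqrt{T(r)}$, i.e. $\sqrt{T(-r)}+\sqrt{T(r)}\ge2$, which holds for all $r\in(0,1)$. With that small addition your argument for (i) is complete --- indeed more complete than the paper's.
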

\begin{proof}
	\begin{enumerate}[(i):]
 \item
		Let $f\in\mathcal{S}^*_{RL}$. Then $\mathcal{Q}_{f}(z)\prec\varphi_{\scriptscriptstyle{RL}}(z)$, where $\varphi_{\scriptscriptstyle{RL}}(z)$ is defined in \eqref{Definition-Shifted-Lem-Ber}.
		Thus, for $|z|=r<1$, we have
		\begin{align*}
		\left|\mathcal{Q}_{f}(z)-1\right| & \leq \left| \sqrt{2}-(\sqrt{2}-1)\sqrt{\frac{1-z}{1+2(\sqrt{2}-1)z}}-1 \right| \\
		& \leq  1- \left( \sqrt{2}-(\sqrt{2}-1)\sqrt{\frac{1+r}{1-2(\sqrt{2}-1)z}} \right).
		\end{align*}
		In view of \Cref{Lemma-Radius-ra-NeP}, the above disk lies inside the nephroid region $\Omega_{Ne}$ provided
		\begin{align*}
		1- \left( \sqrt{2}-(\sqrt{2}-1)\sqrt{\frac{1+r}{1-2(\sqrt{2}-1)z}} \right) \leq \frac{2}{3},
		\end{align*}
		Or equivalently, if $r\leq \rho_{\scriptscriptstyle{RL}}$. The result is sharp for the function $f_{\scriptscriptstyle{RL}}\in\mathcal{S}^*_{RL}$ given by
			\begin{align*}
		f_{\scriptscriptstyle{RL}}(z)=z &\left( \frac{ \sqrt{1-z}+\sqrt{1+2(\sqrt{2}-1)z}} {2}  \right)^{2\sqrt{2}-2}\\
		 & \hspace{8em} \times \exp\left(\sqrt{2 \left(\sqrt{2}-1\right)}\;
		\tan^{-1}\left(\Psi(z)\right)\right),
		\end{align*}
		where
		\begin{align*}
		\Psi(z)=\frac{\sqrt{2 \left(\sqrt{2}-1\right)} \left(\sqrt{2 \left(\sqrt{2}-1\right) z+1}-\sqrt{1-z}\right)}
		{2 \left(\sqrt{2}-1\right) \sqrt{1-z}+\sqrt{2 \left(\sqrt{2}-1\right) z+1}}.
		\end{align*}
		Further,
		 $\mathcal{Q}_{f_{\scriptscriptstyle{RL}}}\left(\partial\mathbb{D}_{\rho_{\scriptscriptstyle{RL}}}\right)\cap\partial\Omega_{Ne}=\{1/3\}$.
 \item
		$f\in\mathcal{S}^*_C$ implies the subordination $\mathcal{Q}_{f}(z) \prec 1+4z/3+2z^2/3$, which in turn gives
		\begin{align*}
		\left|\mathcal{Q}_{f}(z)-1\right| \leq
		2\left( r^2+2r \right)/3, \qquad |z|=r.
		\end{align*}
		Applying \Cref{Lemma-Radius-ra-NeP}, we conclude that
		$f\in\mathcal{S}^*_{Ne}$ if $r^2+2r\leq1$, or, if $r\leq \sqrt{2}-1=\rho_{\scriptscriptstyle{c}}$. To verify the sharpness of the radius estimate $\rho_{\scriptscriptstyle{c}}$, consider the function
		\begin{align*}
		f_{\scriptscriptstyle{c}}(z)=z \exp\left(\frac{4}{3}z+\frac{1}{3}z^2\right).
		\end{align*}
		Observe that $f_{\scriptscriptstyle{c}}\in\mathcal{S}^*_C$, and
		\begin{align*}
		\mathcal{Q}_{f_{\scriptscriptstyle{c}}}(z_0)=5/3\in\partial\Omega_{Ne}, ~~ \text{ for } z_0=\sqrt{2}-1\in\partial\mathbb{D}_{\rho_{\scriptscriptstyle{c}}}.
		\end{align*}
		 Hence the result is sharp.
 \item
		Let $f\in\mathcal{S}^*_{R}$. Then $\mathcal{Q}_{f}(z)-1\prec {z(k+z)}/{k(k-z)}$, where $k=\sqrt{2}+1$. For $|z|=r$, this relation further implies
		\begin{align*}
		\left|\mathcal{Q}_{f}(z)-1\right| \leq
		{r(k+r)}/{k(k-r)} \leq {2}/{3}
		\end{align*}
		if $3r^2+5kr-2k^2\leq0$. This yields $r\leq\rho_{\scriptscriptstyle{R}}$. Thus, in view of \Cref{Lemma-Radius-ra-NeP}, $f\in\mathcal{S}^*_{Ne}$ for $|z|\leq\rho_{\scriptscriptstyle{R}}$. Now consider the function
		\begin{align*}
		f_{\scriptscriptstyle{R}}(z)= \frac{k^2 z}{(k-z)^2} e^{-z/k}, \qquad k=\sqrt{2}+1.
		\end{align*}
		Simple calculations show that $\mathcal{Q}_{f_{\scriptscriptstyle{R}}}(z)=(k^2+z^2)/k(k-z)$, and hence $f_{\scriptscriptstyle{R}}\in\mathcal{S}^*_{R}$. Moreover, the value of $\mathcal{Q}_{f_{\scriptscriptstyle{R}}}(z)$ at the point $z=\rho_{\scriptscriptstyle{R}}\in\partial\mathbb{D}_{\rho_{\scriptscriptstyle{R}}}$ is $5/3$, i.e.,
		\begin{align*}
		 \mathcal{Q}_{f_{\scriptscriptstyle{R}}}\left(\partial\mathbb{D}_{\rho_{\scriptscriptstyle{R}}}\right)\cap\partial\Omega_{Ne}=\{5/3\}\neq\emptyset.
		\end{align*}
		Hence the estimate is best possible.
\qedhere
	\end{enumerate}
	\end{proof}
\begin{figure}[H]
	\begin{subfigure}{0.3\textwidth}
		\centering
		\includegraphics[scale=1.5]{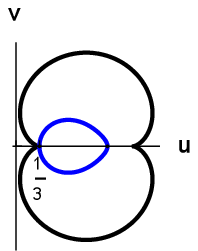}
		\caption{Sharpness of $\rho_{\scriptscriptstyle{RL}}$}
	\end{subfigure}
	\begin{subfigure}{0.3\textwidth}
		\centering
		\includegraphics[scale=1.5]{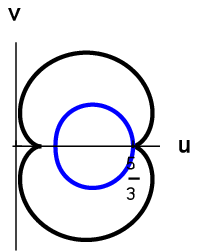}
		\caption{Sharpness of $\rho_{\scriptscriptstyle{c}}$}
	\end{subfigure}
    \begin{subfigure}{0.3\textwidth}
    	\centering
    	\includegraphics[scale=1.5]{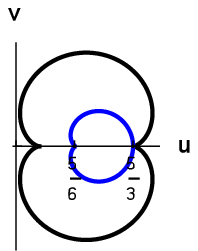}
    	\caption{Sharpness of $\rho_{\scriptscriptstyle{R}}$}
    \end{subfigure}
	\caption{}
	\label{Figure-Sharpness-SCR-RRN}
	\qedhere
\end{figure}

\begin{theorem}
The $\mathcal{S}^*_{Ne}$-radii for the classes $\mathcal{S}^*_{\leftmoon}$ and $\mathcal{S}^*_{S}$ are given by
\begin{enumerate}[{\rm(i)}]
	\item
	${R}_{\mathcal{S}^*_{Ne}}\left(\mathcal{S}^*_{\leftmoon}\right)=(\sqrt{17}-1)/6 \approx 0.520518$,
	\item
	${R}_{\mathcal{S}^*_{Ne}}\left(\mathcal{S}^*_{S}\right)=\sinh^{-1}(2/3)=\log\left((\sqrt{13}+2)/3\right) \approx 0.625145$.
\end{enumerate}
The estimates are not sharp.
\end{theorem}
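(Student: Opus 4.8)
The plan is to handle both parts with the single template used repeatedly above: if $f$ belongs to the class in question, then $\mathcal{Q}_f(z)\prec\psi(z)$ (with $\psi=\varphi_{\scriptscriptstyle{\leftmoon}}$ or $\psi=1+\sin z$), so $\mathcal{Q}_f(\mathbb{D}_r)\subseteq\psi(\mathbb{D}_r)$. I would enclose $\psi(\mathbb{D}_r)$ in a disk centred at $w=1=\psi(0)$ and apply \Cref{Lemma-Radius-ra-NeP} with $a=1$, for which $r_1=\min\{1-1/3,\,5/3-1\}=2/3$. The common governing condition is therefore
\[
\max_{|z|=r}\bigl|\psi(z)-1\bigr|\le\tfrac{2}{3}.
\]

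For part (ii) one has $\psi(z)=1+\sin z$, so $|\psi(z)-1|=|\sin z|$. On $|z|=r$, writing $z=x+iy$ gives $|\sin z|^2=\sin^2x+\sinh^2y$ subject to $x^2+y^2=r^2$; a short monotonicity (or Lagrange-multiplier) argument places the maximum at $z=\pm ir$, whence $\max_{|z|=r}|\sin z|=\sinh r$. The condition $\sinh r\le2/3$ then yields $r\le\sinh^{-1}(2/3)=\log\bigl((\sqrt{13}+2)/3\bigr)$, as stated. For part (i), with $\psi(z)=z+\sqrt{1+z^2}$, I would first use the triangle inequality $|\psi(z)-1|\le|z|+\bigl|\sqrt{1+z^2}-1\bigr|$. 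The decisive step is the estimate $\max_{|z|=r}\bigl|\sqrt{1+z^2}-1\bigr|=1-\sqrt{1-r^2}$: setting $u=z^2$ with $|u|=r^2$ and writing $\bigl|\sqrt{1+u}-1\bigr|=|u|/\bigl|\sqrt{1+u}+1\bigr|$, the denominator is minimised at $u=-r^2$ (i.e.\ $z=\pm ir$), giving the claimed value. Thus the condition becomes $r+1-\sqrt{1-r^2}\le2/3$, i.e.\ $\sqrt{1-r^2}\ge r+1/3$; squaring produces $9r^2+3r-4\le0$, whose admissible root is $r=(\sqrt{17}-1)/6$.

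The non-sharpness is intrinsic to the method and should be recorded. The closed disk $|w-1|\le2/3$ lies in $\overline{\Omega_{Ne}}$ and meets $\partial\Omega_{Ne}$ only at the two cusps $1/3$ and $5/3$ on the real axis. For the sine class at $r=\sinh^{-1}(2/3)$ the real-axis images are $1\pm\sin r$ with $\sin r<\sinh r=2/3$, so the image never reaches the cusps; for the crescent class the triangle inequality is \emph{strict} (at $z=\pm ir$ the summands $z$ and $\sqrt{1+z^2}-1$ are perpendicular, not positively aligned), forcing $\max_{|z|=r}\bigl|\varphi_{\scriptscriptstyle{\leftmoon}}(z)-1\bigr|<2/3$ at $r=(\sqrt{17}-1)/6$. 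In both cases $\mathcal{Q}_f(\partial\mathbb{D}_\rho)$ remains strictly inside $\Omega_{Ne}$, so no extremal function can touch $\partial\Omega_{Ne}$ and the estimates, though valid, are not best possible.

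The only genuinely delicate point I anticipate is justifying that $z=\pm ir$ maximises $\bigl|\sqrt{1+z^2}-1\bigr|$ on $|z|=r$ (equivalently, that $u=-r^2$ minimises $\bigl|\sqrt{1+u}+1\bigr|$ over $|u|=r^2$); since $1+u$ lies in the right half-plane for $r<1$, the principal square root has positive real part, and $\bigl|\sqrt{1+u}+1\bigr|$ is smallest when $\sqrt{1+u}$ is real and as small as possible, namely at $u=-r^2$. Once this extremal identity is secured, both radius inequalities collapse to the elementary algebra indicated above.
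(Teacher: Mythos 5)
Your proposal is correct and follows essentially the same route as the paper: the subordination template plus \Cref{Lemma-Radius-ra-NeP} with $a=1$ (so $r_1=2/3$), the bounds $\left|\sin z\right|\le\sinh r$ and $\left|z+\sqrt{1+z^2}-1\right|\le 1+r-\sqrt{1-r^2}$, and the same quadratic inequalities yielding $(\sqrt{17}-1)/6$ and $\sinh^{-1}(2/3)$. The only difference is that you justify the two maximum estimates (via $\left|\sqrt{1+u}-1\right|=|u|/\left|\sqrt{1+u}+1\right|$ minimized at $u=-r^2$, and $|\sin z|^2=\sin^2x+\sinh^2y$) and the non-sharpness analytically (strictness of the triangle inequality at $z=\pm ir$, and $\sin r<\sinh r$), whereas the paper asserts these bounds and supports non-sharpness by a graphical observation; your version is, if anything, more complete on that point.
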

\begin{proof}
  \begin{enumerate}[(i):]
\item
  	Let $f\in\mathcal{S}^*_{\leftmoon}$. Then $\mathcal{Q}_{f}(z)\prec\varphi_{\scriptscriptstyle{\leftmoon}}(z):= z+\sqrt{1+z^2}$, and hence
  	\begin{align} \label{Disk-Leftmoon-RRN}
  	\left|\mathcal{Q}_{f}(z)-1\right| \leq  \left| z+\sqrt{1+z^2}-1 \right|
  	\leq  1+r-\sqrt{1-r^2},    \quad |z|=r<1.
  	\end{align}
  	From \Cref{Lemma-Radius-ra-NeP}, it follows that the disk \eqref{Disk-Leftmoon-RRN} lies in the interior of the nephroid region  $\Omega_{Ne}$ if $1+r-\sqrt{1-r^2}\leq 2/3$, or, if $r^2+{r}/{3}-{4}/{9}\leq0$. This inequality gives the desired radius estimate. We note that a graphical observation, as well as the fact
  	\begin{align*}
  	\varphi_{\scriptscriptstyle{\leftmoon}}\left(|z|=(\sqrt{17}-1)/6\right)\cap\partial\Omega_{Ne}=\emptyset,
  	\end{align*}
  	 shows that the estimate is not sharp.
\item
  	If $f\in\mathcal{S}^*_S$, then $\mathcal{Q}_{f}(z)\prec 1+\sin{z}$. For, $|z|=r<1$, this yields
  	\begin{align*}
  	\left|\mathcal{Q}_{f}(z)-1\right| \leq \left| \sin{z} \right| \leq \sinh{r} \leq {2}/{3}
  	\end{align*}
  	provided $r\leq\sinh^{-1}(2/3)$. The desired result follows by an application of \Cref{Lemma-Radius-ra-NeP}. Again, the result is non-sharp.
\qedhere
\end{enumerate}
\end{proof}
In view of the above theorem, the following problem is posed.
\begin{problem}
  To find the sharp $\mathcal{S}^*_{Ne}$-radii for the families $\mathcal{S}^*_{\leftmoon}$ and $\mathcal{S}^*_S$.
\end{problem}


\section{$\mathcal{S}^*_{Ne,n}$-Radius For Classes Defined as Ratio of Functions} \label{Section-Radii-Results-Ratio-Functions-RRN}
Let $n\in\mathbb{N}:=\{1,2,3,...\}$. Let $\mathcal{A}_n$ be the collection of all analytic functions $f(z)$ of the form
\begin{align*}
f(z)=z+\sum_{k=n+1}^\infty a_kz^k.
\end{align*}
 For $0\leq\alpha<1$, let $\mathcal{P}_n(\alpha)$ consists of analytic functions $p(z)=1+\sum_{k=n}^\infty p_kz^k$ satisfying $\mathrm{Re}(p(z))>\alpha$, $z\in\mathbb{D}$. Further, let
\begin{align*}
\mathcal{S}^*_{Ne,n}:=\mathcal{S}^*_{Ne}\cap\mathcal{A}_n, \;
\mathcal{S}^*_n:=\mathcal{S}^*\cap\mathcal{A}_n, \;
\mathcal{C}_n:=\mathcal{C}\cap\mathcal{A}_n
\text{ and }
\mathcal{P}_n:=\mathcal{P}_n(0).
\end{align*}
In this section, we will find the $\mathcal{S}^*_{Ne,n}$-radius for several families defined as the ratio of certain special analytic functions. In the first theorem, we consider the following four families. For details , we refer \cite{Ali-Jain-Ravi-2013-Radius-Constants-BMMS}.
    \begin{align*}
    \mathcal{G}_1:=
    \left\{f\in\mathcal{A}_n:\frac{f}{g}\in\mathcal{P}_n \text{ and } \frac{g(z)}{z}\in\mathcal{P}_n,\; g\in\mathcal{A}_n\right\},
    \end{align*}

    \begin{align*}
    \mathcal{G}_2:=
    \left\{f\in\mathcal{A}_n:\frac{f}{g}\in\mathcal{P}_n \text{ and } \frac{g(z)}{z}\in\mathcal{P}_n\left(\frac{1}{2}\right),\; g\in\mathcal{A}_n\right\},
    \end{align*}

    \begin{align*}
    \mathcal{G}_3:=
    \left\{f\in\mathcal{A}_n:\left|\frac{f(z)}{g(z)}-1\right|<1 \text{ and } \frac{g(z)}{z}\in\mathcal{P}_n,\; g\in\mathcal{A}_n\right\},
    \end{align*}
    and
    \begin{align*}
    \mathcal{G}_4:=
    \left\{f\in\mathcal{A}_n:\left|\frac{f(z)}{g(z)}-1\right|<1, \; g\in\mathcal{C}_n\right\}.
    \end{align*}
To prove the results, we make use of the following combined lemma.


\begin{lemma}[{\cite{Ravi-Ronning-1997-Complex-Variables},\cite{Shah-1972-Pacific-JM}}] \label{Lemmas-Combined-Ravi-1997-Shah-1972}
	Let $p\in\mathcal{P}_n(\alpha)$. Then, for $|z|=r$, the following results hold.
	\begin{enumerate}[{\rm(a)}]
\item \label{Lemmas-Combined-a-Ravi}
{\rm (cf. \cite[Lemma 2.1]{Ravi-Ronning-1997-Complex-Variables})}
		\begin{align*}
		\left|p(z)-\frac{1+(1-2{\alpha})r^{2n}}{1-r^{2n}}\right|\leq\frac{2(1-\alpha)r^n}{1-r^{2n}}.
		\end{align*}
\item \label{Lemmas-Combined-b-Shah}
{\rm (cf. \cite[Lemma 2]{Shah-1972-Pacific-JM}) }
	\begin{align*}
	\left|\frac{zp'(z)}{p(z)}\right| \leq\frac{2nr^n(1-\alpha)}{(1-r^n)(1+(1-2\alpha)r^n)}.
	\end{align*}
	\end{enumerate}
\end{lemma}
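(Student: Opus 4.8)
The plan is to reduce both inequalities to a single normalized Schwarz function and then read (a) off a Möbius image of a disk and (b) off a sharp derivative estimate. First I would strip the order parameter: given $p\in\mathcal{P}_n(\alpha)$, the function $q:=(p-\alpha)/(1-\alpha)$ is a Carathéodory function ($q(0)=1$, $\mathrm{Re}\,q>0$) whose expansion $q(z)=1+\sum_{k\ge n}c_kz^k$ begins at order $n$. Writing $\psi:=(q-1)/(q+1)$ gives a Schwarz function with $\psi(0)=0$; since $q-1$ vanishes to order $\ge n$ at the origin while $q+1$ is non-vanishing there, $\psi$ has a zero of order $\ge n$, and the power-version of Schwarz's lemma yields $|\psi(z)|\le|z|^n$. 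Inverting, one obtains the representation
\begin{equation*}
p(z)=\frac{1+(1-2\alpha)\psi(z)}{1-\psi(z)},\qquad |\psi(z)|\le r^n\ \ (|z|=r).
\end{equation*}
This single representation drives both estimates.

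For part (a), I would simply track the image of the disk $\{|\psi|\le r^n\}$ under the Möbius map $w\mapsto (1+(1-2\alpha)w)/(1-w)$. Being a Möbius transformation it carries this disk to a disk, and by the real-axis symmetry its diameter is the image of the segment $[-r^n,r^n]$; computing the two endpoints $w=\pm r^n$ and averaging gives centre $(1+(1-2\alpha)r^{2n})/(1-r^{2n})$ and half-difference (radius) $2(1-\alpha)r^n/(1-r^{2n})$, which is exactly the claimed inclusion. This step is entirely routine once the representation is in hand.

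For part (b), I would first record the logarithmic-derivative identity obtained by differentiating the representation,
\begin{equation*}
\frac{zp'(z)}{p(z)}=\frac{2(1-\alpha)\,z\psi'(z)}{\bigl(1+(1-2\alpha)\psi(z)\bigr)\bigl(1-\psi(z)\bigr)},
\end{equation*}
the numerators $(1-2\alpha)$ and $1$ from the two logarithms combining to $2(1-\alpha)$. The modulus of $z\psi'$ I would control by the sharp Schwarz--Pick estimate adapted to an $n$-fold zero, namely $|z\psi'(z)|\le n r^n(1-|\psi|^2)/(1-r^{2n})$ (sharp, with equality for $\psi(z)=z^n$). Substituting and abbreviating $w=\psi(z)$, the bound reduces to showing
\begin{equation*}
\sup_{|w|\le r^n}\frac{1-|w|^2}{\bigl|1+(1-2\alpha)w\bigr|\,|1-w|}=\frac{1+r^n}{1+(1-2\alpha)r^n},
\end{equation*}
the supremum being attained at the real point $w=r^n$; feeding this back produces the asserted bound $2n r^n(1-\alpha)/\bigl((1-r^n)(1+(1-2\alpha)r^n)\bigr)$.

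I expect the genuine work to sit in this last supremum together with the $n$-fold Schwarz--Pick inequality. For the supremum I would fix $|w|=\rho$ and minimise the denominator product; writing it as a quadratic in $\cos(\arg w)$ shows (after distinguishing the signs of $1-2\alpha$, and checking that the relevant vertex of the quadratic lies outside $[-1,1]$ when $1-2\alpha<0$) that the minimum occurs at $w=\rho$, whence the ratio equals $(1+\rho)/(1+(1-2\alpha)\rho)$, which is increasing in $\rho$ and therefore maximal at $\rho=r^n$. The one external ingredient I would have to justify carefully is the higher-order Schwarz--Pick bound on $\psi'$; everything else is bookkeeping built on the representation.
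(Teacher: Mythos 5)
The paper never proves this lemma at all --- both parts are imported verbatim from the literature, (a) from Lemma 2.1 of Ravichandran--R{\o}nning--Shanmugam and (b) from Lemma 2 of Shah --- so there is no internal argument to compare you against; your proposal has to stand as a proof in its own right, and it does. The normalization $q=(p-\alpha)/(1-\alpha)$, the Schwarz function $\psi=(q-1)/(q+1)$ with an $n$-fold zero, and the resulting representation $p=(1+(1-2\alpha)\psi)/(1-\psi)$ with $|\psi(z)|\le r^n$ are all correct; the M\"obius-image computation in (a) reproduces exactly the stated centre $(1+(1-2\alpha)r^{2n})/(1-r^{2n})$ and radius $2(1-\alpha)r^n/(1-r^{2n})$; the logarithmic-derivative identity in (b) is right; and your extremal claim $\sup_{|w|\le r^n}(1-|w|^2)/\bigl(|1+(1-2\alpha)w|\,|1-w|\bigr)=(1+r^n)/(1+(1-2\alpha)r^n)$ is true by exactly the case analysis you indicate: with $\beta=1-2\alpha$ and $w=se^{i\theta}$, the squared denominator $f(x)$ is a quadratic in $x=\cos\theta$ which for $\beta\ge 0$ is concave (affine when $\beta=0$), so minimal at an endpoint, and $f(1)\le f(-1)$ because $(1+s)(1-\beta s)-(1-s)(1+\beta s)=2s(1-\beta)\ge 0$; for $\beta<0$ it is convex with vertex $x^*=(1-\beta)(1-\beta s^2)/(4|\beta|s)\ge 1$ by AM--GM, hence decreasing on $[-1,1]$. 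In every case the minimum is at $w=s$, the ratio $(1+s)/(1+\beta s)$ increases in $s$, so the supremum sits at $s=r^n$, and the recombination using $1-r^{2n}=(1-r^n)(1+r^n)$ gives precisely the bound in (b).

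The one ingredient you flag but do not prove --- $|z\psi'(z)|\le nr^n(1-|\psi(z)|^2)/(1-r^{2n})$ for a Schwarz function with an $n$-fold zero --- is true and closes easily, so it is an outsourced lemma rather than a gap: write $\psi(z)=z^n\omega(z)$ with $\omega$ a self-map of $\mathbb{D}$, so that $z\psi'(z)=n\psi(z)+z^{n+1}\omega'(z)$, and apply the classical Schwarz--Pick inequality to $\omega$. With $t=|\omega(z)|$, the required estimate reads $nr^nt+r^{n+1}(1-t^2)/(1-r^2)\le nr^n(1-r^{2n}t^2)/(1-r^{2n})$; subtracting $nr^nt$ from both sides and cancelling the common factor $r^n(1-t)$ turns it into $r(1+t)\sum_{k=0}^{n-1}r^{2k}\le n(1+r^{2n}t)$, which is linear in $t$, clear at $t=0$ (each of the $n$ terms $r^{2k+1}$ is less than $1$), and at $t=1$ follows by summing $r^{2k+1}+r^{2n-2k-1}\le 1+r^{2n}$, i.e.\ $(1-r^{2k+1})(1-r^{2n-2k-1})\ge 0$, over $0\le k\le n-1$. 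So where the paper simply cites, you give a correct, essentially self-contained derivation of both parts from a single representation; the price is exactly these two elementary but fiddly extremal computations that the cited sources package away.
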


\begin{theorem}
For the function classes
$\mathcal{G}_1$,
$\mathcal{G}_2$,
$\mathcal{G}_3$
and
$\mathcal{G}_4$,
we have the following radii results.
\begin{enumerate}[{\rm(i)}]
\item
${R}_{\mathcal{S}^*_{Ne,n}}\left(\mathcal{G}_1\right)=\rho_{\scriptscriptstyle{1}}:=\left(3n+\sqrt{9n^2+1}\right)^{-1/n}$.
\item
${R}_{\mathcal{S}^*_{Ne,n}}\left(\mathcal{G}_2\right)=\rho_{\scriptscriptstyle{2}}:=\sqrt[n]{4} \times \left(9n+\sqrt{81n^2+24n+16}\right)^{-1/n}$.
\item
${R}_{\mathcal{S}^*_{Ne,n}}\left(\mathcal{G}_3\right)=\rho_{\scriptscriptstyle{3}}:=\sqrt[n]{4} \times \left(9n+\sqrt{(4+9n)^2-48n}\right)^{-1/n}$.
\item
${R}_{\mathcal{S}^*_{Ne,n}}\left(\mathcal{G}_4\right)=\rho_{\scriptscriptstyle{4}}:=\sqrt[n]{4} \times \left(3(n+1)+\sqrt{(1+3n)^2+36n}\right)^{-1/n}$.
\end{enumerate}
Each estimate is sharp.
\end{theorem}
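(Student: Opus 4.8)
The plan is to turn each of the four assertions into an instance of the single containment criterion of \Cref{Lemma-Radius-ra-NeP}. For $f$ in any of the classes I would produce a disk $\{w:|w-c|\le R\}$, with real center $c=c(r)$ and radius $R=R(r)$, that captures $\mathcal{Q}_f(z)=zf'(z)/f(z)$ for every $|z|=r$; then $\mathcal{Q}_f(\mathbb{D}_r)\subset\Omega_{Ne}$ holds as soon as this disk sits inside the nephroid, which by \Cref{Lemma-Radius-ra-NeP} means $R\le r_c$. Writing $t=r^n$, each such inequality becomes a quadratic in $t$ whose positive root, after taking $n$-th roots, is the claimed $\rho_i$.

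First I would translate the ratio conditions into $\mathcal{P}_n$-data and differentiate logarithmically. For $\mathcal{G}_1$, $p=f/g$ and $q=g/z$ lie in $\mathcal{P}_n$ and $f=zpq$, so $\mathcal{Q}_f-1=zp'/p+zq'/q$; for $\mathcal{G}_2$ the factor $q=g/z$ lies in $\mathcal{P}_n(1/2)$ instead; for $\mathcal{G}_3$ I would use $|f/g-1|<1\iff g/f\in\mathcal{P}_n(1/2)$ to write $f=zQ/P$ with $Q=g/z\in\mathcal{P}_n$, $P=g/f\in\mathcal{P}_n(1/2)$ and $\mathcal{Q}_f-1=zQ'/Q-zP'/P$; and for $\mathcal{G}_4$, $g\in\mathcal{C}_n$ gives $zg'/g\in\mathcal{P}_n(1/2)$ while $g/f\in\mathcal{P}_n(1/2)$, so $\mathcal{Q}_f=zg'/g-zP'/P$ with $P=g/f$. \Cref{Lemmas-Combined-Ravi-1997-Shah-1972} then feeds in the estimates: part (a) places $zg'/g$ (for $\mathcal{G}_4$) in the disk of center $1/(1-t^2)$ and radius $t/(1-t^2)$, while the logarithmic-derivative part (b) controls the $zp'/p$-terms. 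The essential refinement is that for a $\mathcal{P}_n(1/2)$-factor the precise range of $zP'/P$ over $|z|=r$ is not the centred disk of part (b) but the disk of center $nt^2/(1-t^2)$ and radius $nt/(1-t^2)$; keeping this shift is what fixes the correct center of the $\mathcal{Q}_f$-disk.

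Assembling the Minkowski sums, I expect: for $\mathcal{G}_1$, center $1$ and radius $4nt/(1-t^2)$; for $\mathcal{G}_2$, center $(1+(n-1)t^2)/(1-t^2)\ge1$ and radius $3nt/(1-t^2)$; for $\mathcal{G}_3$, center $(1-(n+1)t^2)/(1-t^2)\le1$ and radius $3nt/(1-t^2)$; and for $\mathcal{G}_4$, center $(1-nt^2)/(1-t^2)\le1$ and radius $(n+1)t/(1-t^2)$. Invoking \Cref{Lemma-Radius-ra-NeP} with the branch chosen by the sign of $c-1$ — namely $R\le 5/3-c$ (clearing the right cusp $5/3$) when $c\ge1$, and $R\le c-1/3$ (clearing the left cusp $1/3$) when $c\le1$ — I obtain $t^2+6nt-1\le0$ for $\mathcal{G}_1$, $(3n+2)t^2+9nt-2\le0$ for both $\mathcal{G}_2$ and $\mathcal{G}_3$, and $(3n-1)t^2+3(n+1)t-2\le0$ for $\mathcal{G}_4$; solving and noting $(4+9n)^2-48n=81n^2+24n+16$ and $(1+3n)^2+36n=9n^2+42n+1$ reproduces $\rho_1,\dots,\rho_4$. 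Sharpness I would settle with the boundary functions $(1+z^n)/(1-z^n)\in\mathcal{P}_n$, $1/(1-z^n)\in\mathcal{P}_n(1/2)$ and the convex map $z(1-z^n)^{-1/n}$: forming the corresponding products and quotients (e.g. $f(z)=z\bigl((1+z^n)/(1-z^n)\bigr)^2$ for $\mathcal{G}_1$ and $f(z)=z(1-z^n)^{-1/n}(1+z^n)$ for $\mathcal{G}_4$) gives $f\in\mathcal{G}_i$ with $\mathcal{Q}_f(\partial\mathbb{D}_{\rho_i})$ meeting $\partial\Omega_{Ne}$ exactly at $5/3$ when $c\ge1$ and at $1/3$ when $c\le1$.

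The hard part will be the center bookkeeping for the classes carrying a $\mathcal{P}_n(1/2)$-factor, and above all for $\mathcal{G}_4$. A crude triangle inequality that recenters every logarithmic-derivative disk at the origin would park $\mathcal{Q}_f$ in a disk about $1/(1-t^2)>1$ and, via the right-cusp branch, yield the wrong value $2/(3n+5)$; one must instead retain the genuine off-origin center $nt^2/(1-t^2)$, notice that it drags the center of the $\mathcal{Q}_f$-disk to the \emph{left} of $1$, and consequently switch to the left-cusp constraint $c-1/3$ of \Cref{Lemma-Radius-ra-NeP}. The remaining delicate point is to confirm that the proposed extremal functions actually attain the governing cusp — that $\mathcal{Q}_{f}$ takes the value $1/3$ (resp. $5/3$) on $\partial\mathbb{D}_{\rho_i}$ — which is what pins down sharpness.
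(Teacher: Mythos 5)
Your handling of $\mathcal{G}_1$, $\mathcal{G}_2$ and $\mathcal{G}_3$ agrees in substance with the paper's proof: the same decompositions $f=zpq$ (resp. $f=zp/q$), the same identity for $\mathcal{Q}_f$, \Cref{Lemmas-Combined-Ravi-1997-Shah-1972} feeding \Cref{Lemma-Radius-ra-NeP}, the same quadratics in $t=r^n$, and the same extremal functions. In fact, for $\mathcal{G}_2,\mathcal{G}_3$ your re-centering is superfluous: the crude Shah bound $|zP'/P|\le nt/(1-t)$ for the $\mathcal{P}_n(1/2)$-factor already gives the disk $|\mathcal{Q}_f-1|\le(3nt+nt^2)/(1-t^2)$, and the branch ``radius $\le 2/3$'' of \Cref{Lemma-Radius-ra-NeP} yields the same inequality $(3n+2)t^2+9nt-2\le0$; this is exactly what the paper does. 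The genuine gap is the claim that carries your part (iv): that for $P\in\mathcal{P}_n(1/2)$ the values of $zP'(z)/P(z)$ on $|z|=r$ fill precisely the disk of center $nt^2/(1-t^2)$ and radius $nt/(1-t^2)$. That disk is the image of $|z|=r$ under the \emph{single} function $P(z)=1/(1-z^n)$; it is not a bound for the whole class, and the statement is false as written. Take $P(z)=1/(1-z^{2n})\in\mathcal{P}_n(1/2)$: then $zP'(z)/P(z)=2nz^{2n}/(1-z^{2n})$, which at $z=re^{i\pi/(2n)}$ equals $-2nt^{2}/(1+t^{2})$, and this lies strictly to the left of the leftmost point $-nt/(1+t)$ of your disk whenever $t>\sqrt{2}-1$. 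Neither part of \Cref{Lemmas-Combined-Ravi-1997-Shah-1972}, nor anything else you invoke, supplies an off-center containment valid for all of $\mathcal{P}_n(1/2)$. Consequently your derivation of $(3n-1)t^{2}+3(n+1)t-2\le0$ for $\mathcal{G}_4$ rests on an unproved (indeed generally false) assertion; parts (ii)--(iii) of your argument survive only because the paper's centered-disk computation independently produces the same quadratic, while part (iv) does not survive at all.

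To be fair, you have isolated a real difficulty rather than invented one. The paper's own proof of (iv) encloses $\mathcal{Q}_f$ in the disk with center $a=1/(1-r^{2n})>1$ and radius $((n+1)r^n+nr^{2n})/(1-r^{2n})$ and then imposes ``radius $\le a-1/3$'', which is the branch of \Cref{Lemma-Radius-ra-NeP} valid only for centers $a\le1$; the legitimate branch (radius $\le 5/3-a$) gives exactly the value $t\le 2/(3n+5)$ that you call wrong, and indeed at $t=\rho_4^{\,n}$ the paper's disk protrudes past the cusp, since its rightmost point $(1+nt)/(1-t)$ exceeds $5/3$ precisely when $t>2/(3n+5)$, while $\rho_4^{\,n}>2/(3n+5)$. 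So the containment half of (iv) genuinely needs a finer estimate than a triangle inequality --- for instance a Dieudonn\'e/two-point Schwarz--Pick argument proving your shifted disk on the restricted range $t\le\rho_4^{\,n}$ (which is $<\sqrt{2}-1$, so the counterexample above does not forbid such a restricted statement) --- and neither your proposal nor, strictly speaking, the paper supplies one. A smaller defect you share with the paper: certifying $f_4\in\mathcal{G}_4$ requires $g_4(z)=z(1-z^n)^{-1/n}$ to be convex, but $1+zg_4''(z)/g_4'(z)=(1+nz^n)/(1-z^n)$ has negative real part for $z^n$ near $-1$ once $n\ge2$, so $g_4\in\mathcal{C}_n$ only for $n=1$.
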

\begin{proof}
\begin{enumerate}[{(i):}]
\item
	Let $f\in\mathcal{G}_1$. Further, suppose that
	\begin{align*}
	p(z)={g(z)}/{z} \quad \text{ and } \quad q(z)={f(z)}/{g(z)}.
	\end{align*}
	Clearly, $p,q\in\mathcal{P}_n$. Thus, for $|z|=r$, the following inequalities follow from \Cref{Lemmas-Combined-Ravi-1997-Shah-1972}\eqref{Lemmas-Combined-b-Shah}
	\begin{align} \label{Inequalities-GM-Shah-RRN}
	\left| \frac{zp'(z)}{p(z)} \right| \leq \frac{2nr^n}{1-r^{2n}}
	\quad \text{ and } \quad
	\left| \frac{zq'(z)}{q(z)} \right| \leq \frac{2nr^n}{1-r^{2n}}.
	\end{align}
	Moreover, we have $f(z)=g(z)q(z)=zp(z)q(z)$, which gives the identity
	\begin{align} \label{Identity-Ratio-Fns-RRN}
	\mathcal{Q}_{f}(z)=\frac{zf'(z)}{f(z)}=1+\frac{zp'(z)}{p(z)}+\frac{zq'(z)}{q(z)}.
	\end{align}
	Using the inequalities \eqref{Inequalities-GM-Shah-RRN}, the above identity yields the disk
	\begin{align*}
	\left| \mathcal{Q}_{f}(z)-1 \right| \leq {4nr^n}/{(1-r^{2n})}.
	\end{align*}
	In view of \Cref{Lemma-Radius-ra-NeP}, the quantity $\mathcal{Q}_{f}(z)$ takes values from the region bounded by the nephroid \eqref{Equation-of-Nephroid} provided
	$4nr^n/(1-r^{2n})\leq2/3$, or equivalently, $r^{2n}+6nr^n-1\leq0$. This further gives $r\leq \rho_{\scriptscriptstyle{1}}$. To verify the sharpness of the bound $\rho_{\scriptscriptstyle{1}}$, consider the functions $f_1,g_1\in\mathcal{A}_n$ given by
	\begin{align*}
	f_1(z)=z\left(\frac{1+z^n}{1-z^n}\right)^2
	\quad \text{ and } \quad
	g_1(z)=z\left(\frac{1+z^n}{1-z^n}\right).
	\end{align*}
	It is clear that $f_1(z)/g_1(z)=g_1(z)/z=(1+z^n)/(1-z^n)\in\mathcal{P}_n$, and hence $f_1\in\mathcal{G}_1$. Also,
	\begin{align*}
	\mathcal{Q}_{f_{\scriptscriptstyle{1}}}(z)=1+{4nz^n}/{(1-z^{2n})},
	\end{align*}
	and at the points $z=\pm\rho_{\scriptscriptstyle{1}}\in\partial\mathbb{D}_{\rho_1}$, we have
	\begin{align*}
	\mathcal{Q}_{f_1}(-\rho_{\scriptscriptstyle{1}})=1/3\in\partial\Omega_{Ne}
	~\text{ and }~
	\mathcal{Q}_{f_1}(\rho_{\scriptscriptstyle{1}})=5/3\in\partial\Omega_{Ne}.
	\end{align*}
	 This proves sharpness of the estimate.
\item
	Let $f\in\mathcal{G}_2$. Define the functions $p,q:\mathbb{D}\to\mathbb{C}$ by $p=g/z$ and $q=f/g$. Then $f(z)=zp(z)q(z)$ with $p\in\mathcal{P}_n(1/2)$ and $q\in\mathcal{P}_n$. In light of the identity \eqref{Identity-Ratio-Fns-RRN}, it follows from \Cref{Lemmas-Combined-Ravi-1997-Shah-1972}\eqref{Lemmas-Combined-b-Shah}  that
	\begin{align*}
	\left|\mathcal{Q}_{f}(z)-1\right| \leq \frac{2nr^n}{1-r^{2n}}+\frac{nr^n}{1-r^{n}}=\frac{3nr^n+nr^{2n}}{1-r^{2n}}.
	\end{align*}
	Now, from \Cref{Lemma-Radius-ra-NeP}, $f\in\mathcal{S}^*_{Ne}$ if
	\begin{align*}
	\frac{3nr^n+nr^n}{1-r^{2n}}\leq\frac{2}{3}.
	\end{align*}
	That is, if $(3n+2)r^{2n}+9nr^n-2\leq0$, which gives the desired result $r\leq\rho_{\scriptscriptstyle{2}}$. To prove that $\rho_{\scriptscriptstyle{2}}$ is the sharp $\mathcal{S}^*_{Ne,n}$-radius for the function family $\mathcal{G}_2$, define
	\begin{align*}
	f_2(z)=\frac{z(1+z^n)}{(1-z^n)^2}
	\quad \text{ and } \quad
	g_2(z)=\frac{z}{1-z^n}.
	\end{align*}
	It is easy to see that ${f_2}/{g_2}\in\mathcal{P}_n$ and ${g_2}/{z}\in\mathcal{P}_n(1/2)$, which shows that $f_2$ is a member of $\mathcal{G}_2$. Further,
	\begin{align*}
	\mathcal{Q}_{f_2}(z)\big{|}_{z=\rho_2}=\frac{1+3nz^n+(n-1)z^{2n}}{1-z^{2n}}\Big{|}_{z=\rho_2}=\frac{5}{3}.
	\end{align*}
	This shows that
	\begin{align*}
	\mathcal{Q}_{f_2}\left(\partial\mathbb{D}_{\rho_2}\right)\cap\partial\Omega_{Ne}=\{5/3\}\neq\emptyset,
	\end{align*}	
	and hence the estimate is sharp.
\item
	Let $f\in\mathcal{G}_3$. Further, suppose that $p(z)={g(z)}/{z}$ and $q(z)={g(z)}/{f(z)}$. Obviously $p\in\mathcal{P}_n$, and $q\in\mathcal{P}_n(1/2)$ due to the fact that
	\begin{align*}
	\left|\frac{f(z)}{g(z)}-1\right|<1  \iff \frac{g}{f}\in\mathcal{P}_n\left(\frac{1}{2}\right).
	\end{align*}
	Verify that $f(z)=zp(z)/q(z)$ and
	\begin{align*}
	\mathcal{Q}_{f}(z)=1+\frac{zp'(z)}{p(z)}-\frac{zq'(z)}{q(z)}.
	\end{align*}
	Applying \Cref{Lemmas-Combined-Ravi-1997-Shah-1972}\eqref{Lemmas-Combined-b-Shah}  and simplifying, we obtain
	\begin{align*}
	\left|\mathcal{Q}_{f}(z)-1\right| \leq {(3nr^n+nr^{2n})}/{(1-r^{2n})} \leq {2}/{3}
	\end{align*}
	if $(3n+2)r^{2n}+9nr^n-2\leq0$, or, $r\leq\rho_{\scriptscriptstyle{3}}$. Therefore, by \Cref{Lemma-Radius-ra-NeP}, $f\in\mathcal{S}^*_{Ne}$ provided $r\leq\rho_{\scriptscriptstyle{3}}$. For sharpness, consider
	\begin{align*}
	f_3(z)=\frac{z(1+z^n)^2}{1-z^n}
	\quad \text{ and } \quad
	g_3(z)=\frac{z(1+z^n)}{1-z^n}
	\end{align*}
	satisfying
	\begin{align*}
	\left|\frac{f_3(z)}{g_3(z)}-1\right|=|z|^n<1
	\quad \text{ and } \quad
	\frac{g_3(z)}{z}=\frac{1+z^n}{1-z^n}\in\mathcal{P}_n,
	\end{align*}
	so that $f_3\in\mathcal{G}_3$. Moreover, it is easy to see that $\mathcal{Q}_{f_3}(z)$ assumes the value $1/3$ at the point  $z=\rho_3e^{i\pi/n}\in\partial\mathbb{D}_{\rho_3}$. This shows that the estimate is best possible.
\item
	Let $f\in\mathcal{G}_4$ and let $q:\mathbb{D}\to\mathbb{C}$ be given by $q=g/f$, where $g\in\mathcal{A}_n$ is some convex function. As earlier, we have $q\in\mathcal{P}_n(1/2)$ and hence \Cref{Lemmas-Combined-Ravi-1997-Shah-1972}\eqref{Lemmas-Combined-b-Shah}  gives
	\begin{align} \label{Inequalities2-GM-Shah-RRN}
	\left|\frac{zq'(z)}{q(z)}\right| \leq \frac{nr^n}{1-r^n}.
	\end{align}
	Further, since convexity of $g$ implies $zg'/g\in\mathcal{P}_n(1/2)$, we have the following inequality from \Cref{Lemmas-Combined-Ravi-1997-Shah-1972}\eqref{Lemmas-Combined-a-Ravi},	
	\begin{align} \label{Inequalities-Ronning-1997-RRN}
	\left|\frac{zg'(z)}{g(z)}-\frac{1}{1-r^{2n}}\right| \leq \frac{r^n}{1-r^{2n}}.
	\end{align}
	In view of the inequalities \eqref{Inequalities2-GM-Shah-RRN} and \eqref{Inequalities-Ronning-1997-RRN}, and the identity
	\begin{align*}
	\mathcal{Q}_f(z)=\frac{zg'(z)}{g(z)}-\frac{zq'(z)}{q(z)},
	\end{align*}
	we calculate that
	\begin{align} \label{Disk-Last-Part-Ratio-Fns}
	\left|\mathcal{Q}_f(z)-\frac{1}{1-r^{2n}}\right| \leq  \frac{(n+1)r^n+nr^{2n}}{1-r^{2n}}.
	\end{align}
	As the number $1-r^{2n}<1$ for each $r\in(0,1)$, it follows from \Cref{Lemma-Radius-ra-NeP} that the disc \eqref{Disk-Last-Part-Ratio-Fns} lies in the interior of the nephroid domain $\Omega_{Ne}$ if
	\begin{align*}
	\frac{(n+1)r^n+nr^{2n}}{1-r^{2n}} \leq \frac{1}{1-r^{2n}}-\frac{1}{3}.
	\end{align*}
	Or equivalently, if
	\begin{align*}
	(3n-1)r^{2n}+3(n+1)r^n-2\leq0.
	\end{align*}
	This further gives $r\leq\rho_{\scriptscriptstyle{4}}$. For sharpness of the estimate, consider
	\begin{align*}
	f_4(z)=\frac{z(1+z^n)}{(1-z^n)^{1/n}}
	\quad \text{ and } \quad
	g_4(z)=\frac{z}{(1-z^n)^{1/n}}.
	\end{align*}
	Since $\left|{f_4(z)}/{g_4(z)}-1\right|=|z|^n<1$ and $g_4\in\mathcal{C}_n$, we have $f_4\in\mathcal{G}_4$. Moreover, $\mathcal{Q}_{f_4}(-\rho_{\scriptscriptstyle{4}})=1/3$ and $\mathcal{Q}_{f_4}(\rho_{\scriptscriptstyle{4}})=5/3$, i,e.,
	 \begin{align*}
	 \mathcal{Q}_{f_4}\left(\partial\mathbb{D}_{\rho_4}\right)\cap\partial\Omega_{Ne}=\{1/3,5/3\}\neq\emptyset.
	 \end{align*}
	Therefore, the estimate is sharp.
\qedhere
\end{enumerate}
\end{proof}


In continuation of our discussion, we provide radii results for two classes of functions that possess nice geometrical properties. These classes had less attention with respect to properties other than the radii results, after the celebrated de Branges's theorem.
\subsection*{The close-to-starlike class $\mathcal{CS}^*_n(\alpha)$}
For $\alpha\in[0,1)$, the class of close-to-starlike functions $\mathcal{CS}^*_n(\alpha)$ of type $\alpha$ was introduced by Reade \cite{Reade-1955-Close-to-Convex} and is defined as
\begin{align*}
\mathcal{CS}^*(\alpha):=\left\{f\in\mathcal{A}_n:\frac{f}{g}\in\mathcal{P}_n,\, g\in\mathcal{S}^*_n(\alpha)\right\}.
\end{align*}
The following theorem determines the $\mathcal{S}^*_{Ne,n}$-radius for $\mathcal{CS}^*_n(\alpha)$.
\begin{theorem} \label{Theorem-Radius-Close-to-Stralike-RRN}
	The sharp $\mathcal{S}^*_{Ne,n}$-radius for $\mathcal{CS}^*_n(\alpha)$ is given by
	\begin{align*}
	 {R}_{\mathcal{S}^*_{Ne,n}}\left(\mathcal{CS}^*_n(\alpha)\right)=\rho_{cs}:=\left(\frac{2}{3(1+n-\alpha)+\sqrt{9(1+n-\alpha)^2+4(4-3\alpha)}}\right)^{1/n}.
	\end{align*}
\end{theorem}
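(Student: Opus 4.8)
The plan is to reuse the logarithmic-derivative decomposition employed for the families $\mathcal{G}_i$, combined with the two-sided disk estimates of \Cref{Lemmas-Combined-Ravi-1997-Shah-1972}. First I would take $f\in\mathcal{CS}^*_n(\alpha)$ and set $q:=f/g\in\mathcal{P}_n$, where $g\in\mathcal{S}^*_n(\alpha)$. Since $f=gq$, the identity
\begin{align*}
\mathcal{Q}_f(z)=\frac{zg'(z)}{g(z)}+\frac{zq'(z)}{q(z)}
\end{align*}
holds. Because $g\in\mathcal{S}^*_n(\alpha)$ is equivalent to $zg'/g\in\mathcal{P}_n(\alpha)$, part \eqref{Lemmas-Combined-a-Ravi} of the lemma provides a disk for $zg'/g$ centred at $(1+(1-2\alpha)r^{2n})/(1-r^{2n})$ with radius $2(1-\alpha)r^n/(1-r^{2n})$, while part \eqref{Lemmas-Combined-b-Shah} applied to $q\in\mathcal{P}_n$ (that is, $\alpha=0$) bounds $|zq'/q|$ by $2nr^n/(1-r^{2n})$.

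Adding these, $\mathcal{Q}_f(z)$ is confined to the disk
\begin{align*}
\left|\mathcal{Q}_f(z)-\frac{1+(1-2\alpha)r^{2n}}{1-r^{2n}}\right|\leq\frac{2(1+n-\alpha)r^n}{1-r^{2n}}.
\end{align*}
A short computation shows the centre $a=(1+(1-2\alpha)r^{2n})/(1-r^{2n})$ satisfies $a-1=2(1-\alpha)r^{2n}/(1-r^{2n})\geq0$, so $a\geq1$ and \Cref{Lemma-Radius-ra-NeP} guarantees $\mathcal{Q}_f(\mathbb{D}_r)\subset\Omega_{Ne}$ once the disk radius does not exceed $5/3-a$. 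Clearing denominators reduces this requirement to the quadratic inequality $(4-3\alpha)r^{2n}+3(1+n-\alpha)r^n-1\leq0$ in the variable $r^n$; its positive root, after rationalising the surd, is exactly $\rho_{cs}^{\,n}$, yielding $r\leq\rho_{cs}$.

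For sharpness I would exhibit the extremal pair $g(z)=z(1-z^n)^{-2(1-\alpha)/n}\in\mathcal{S}^*_n(\alpha)$ and $q(z)=(1+z^n)/(1-z^n)\in\mathcal{P}_n$, so that
\begin{align*}
f_{cs}(z)=\frac{z(1+z^n)}{(1-z^n)^{(2(1-\alpha)+n)/n}}\in\mathcal{CS}^*_n(\alpha),
\end{align*}
and then verify that
\begin{align*}
\mathcal{Q}_{f_{cs}}(z)=\frac{1+2(1+n-\alpha)z^n+(1-2\alpha)z^{2n}}{1-z^{2n}}
\end{align*}
attains the value $5/3\in\partial\Omega_{Ne}$ at $z=\rho_{cs}$, which is immediate once $\rho_{cs}^{\,n}$ is substituted and the defining quadratic is invoked.

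The main obstacle I anticipate is purely algebraic: correctly identifying that the applicable branch of \Cref{Lemma-Radius-ra-NeP} is $r_a=5/3-a$ (forced by $a\geq1$ rather than $a\leq1$, in contrast to the Janowski case with $B\geq0$), and then executing the rationalisation $\frac{-b+\sqrt{b^2+4(4-3\alpha)}}{2(4-3\alpha)}=\frac{2}{b+\sqrt{b^2+4(4-3\alpha)}}$ with $b=3(1+n-\alpha)$ to recast the root of the quadratic in the stated closed form. Once that bookkeeping is settled, the sharpness check is simply the defining equation read backwards, so no further difficulty is expected.
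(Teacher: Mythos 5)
Your proposal is correct and follows essentially the same route as the paper: the same decomposition $f=gq$ with the identity $\mathcal{Q}_f=zg'/g+zq'/q$, the same applications of parts (a) and (b) of \Cref{Lemmas-Combined-Ravi-1997-Shah-1972}, the same disk with center $a\geq1$ forcing the $5/3-a$ branch of \Cref{Lemma-Radius-ra-NeP}, the same quadratic $(4-3\alpha)r^{2n}+3(1+n-\alpha)r^n-1\leq0$, and the identical extremal function $z(1+z^n)(1-z^n)^{-(n+2-2\alpha)/n}$. Your sharpness verification is in fact slightly more explicit than the paper's, since you write out $\mathcal{Q}_{f_{cs}}$ in closed form and show that the value $5/3$ is equivalent to the defining quadratic.
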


\begin{proof}
	If $f\in\mathcal{CS}^*_n(\alpha)$, then $h={f}/{g}\in\mathcal{P}_n$ for some function $g\in\mathcal{S}^*_n(\alpha)$. Applying \Cref{Lemmas-Combined-Ravi-1997-Shah-1972}\eqref{Lemmas-Combined-b-Shah}, we have
	\begin{align} \label{Bound-Class-P-MacGregor-1963}
	\left|\frac{zh'(z)}{h(z)}\right| \leq\frac{2nr^n}{1-r^{2n}}.
	\end{align}
	Also, $g\in\mathcal{S}^*_n(\alpha)$ implies ${zg'}/{g}\in\mathcal{P}_n(\alpha)$, so that \Cref{Lemmas-Combined-Ravi-1997-Shah-1972}\eqref{Lemmas-Combined-a-Ravi} gives
	\begin{align} \label{Bound-Class-P-Alpha-Ravi-Ronning-1997}
	\left|\frac{zg'(z)}{g(z)}-\frac{1+(1-2{\alpha})r^{2n}}{1-r^{2n}}\right| \leq \frac{2(1-\alpha)r^n}{1-r^{2n}}.
	\end{align}
	In view of the identity
	\begin{align*}
	\mathcal{Q}_f(z)=\frac{zg'(z)}{g(z)}+\frac{zh'(z)}{h(z)},
	\end{align*}
	it follows from \eqref{Bound-Class-P-MacGregor-1963} and \eqref{Bound-Class-P-Alpha-Ravi-Ronning-1997} that
	\begin{align} \label{Disk-Radius-Close-to-Starlike}
	\left|\mathcal{Q}_f(z)-\frac{1+(1-2{\alpha})r^{2n}}{1-r^{2n}}\right| \leq \frac{2(1+n-\alpha)r^n}{1-r^{2n}}.
	\end{align}
	The inequality \eqref{Disk-Radius-Close-to-Starlike} represents a disk with center $a$ and radius $r_0$ given by
	\begin{align*}
	a:=\frac{1+(1-2{\alpha})r^{2n}}{1-r^{2n}} ~ \text{ and } ~ r_0:=\frac{2(1+n-\alpha)r^n}{1-r^{2n}}.
	\end{align*}	
	As $a>1$, we conclude from \Cref{Lemma-Radius-ra-NeP} that the disk \eqref{Disk-Radius-Close-to-Starlike} lies in the interior of the region $\Omega_{Ne}$ bounded by the nephroid \eqref{Equation-of-Nephroid} if $r_0\leq{5}/{3}-a$. That is, if
	 \begin{align*}
	 (4-3\alpha)r^{2n}+3(1+n-\alpha)r^n-1\leq0.
	 \end{align*}
	On solving this inequality we obtain $r\leq\rho_{cs}$. For sharpness, consider the functions $f,g\in\mathcal{A}_n$ given by
	\begin{align*}
	f(z)=\frac{z(1+z^n)}{(1-z^n)^{{(n+2-2\alpha)}/{n}}} \quad  \text{ and } \quad  g(z)=\frac{z}{(1-z^n)^{(2-2\alpha)/n}}.
	\end{align*}
	Since $f/g=(1+z^n)/(1-z^n)\in\mathcal{P}_n$ and $g\in\mathcal{S}^*_n(\alpha)$, the function $f\in\mathcal{CS}^*_n(\alpha)$. For this function $f$, we have $\mathcal{Q}_{f}(\rho_{cs})=5/3$. Thus the estimate is sharp.
\end{proof}

\subsection*{The MacGregor class $\mathcal{W}_n$}
Define the class $\mathcal{W}_n$ as
\begin{align*}
\mathcal{W}_n:=\left\{f\in\mathcal{A}_n:{f}/{z}\in\mathcal{P}_n\right\}.
\end{align*}
Introduced first by MacGregor \cite{MacGregor-1963-PAMS}, this function class was recently studied for several radius problems by the authors in \cite{Ali-Jain-Ravi-2012-Radii-LemB-AMC,Cho-2019-Sine-BIMS,Mendiratta-2014-Shifted-Lemn-Bernoulli,
	Mendiratta-Ravi-2015-Expo-BMMS}. Following them, we prove the following sharp radius result for $\mathcal{W}_n$.

\begin{theorem}
	The sharp $\mathcal{S}^*_{Ne,n}$-radius for $\mathcal{W}_n$ is
	\begin{align*}
	 {R}_{\mathcal{S}^*_{Ne,n}}\left(\mathcal{W}_n\right)=\rho_{\scriptscriptstyle{\mathcal{W}}}:=\left({2}/{(3n+\sqrt{9n^2+4})}\right)^{1/n}.
	\end{align*}
\end{theorem}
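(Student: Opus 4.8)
The plan is to reduce everything to a single estimate that confines the values of $\mathcal{Q}_f(z)$ to a disk centered at $1$, and then to invoke \Cref{Lemma-Radius-ra-NeP} at the balance point $a=1$, where both branches of $r_a$ coincide to give $r_a=2/3$. First, for $f\in\mathcal{W}_n$ I would set $p(z)=f(z)/z$, so that $p\in\mathcal{P}_n=\mathcal{P}_n(0)$ and $f(z)=zp(z)$. Logarithmic differentiation then yields the identity
\[
\mathcal{Q}_f(z)=\frac{zf'(z)}{f(z)}=1+\frac{zp'(z)}{p(z)},
\]
so that controlling $\mathcal{Q}_f(z)-1$ reduces to controlling $zp'(z)/p(z)$.

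Next I would apply \Cref{Lemmas-Combined-Ravi-1997-Shah-1972}\eqref{Lemmas-Combined-b-Shah} with $\alpha=0$, which gives, for $|z|=r$, the bound $\left|zp'(z)/p(z)\right|\leq 2nr^n/(1-r^{2n})$, and hence $\left|\mathcal{Q}_f(z)-1\right|\leq 2nr^n/(1-r^{2n})$. Since this is a disk centered at $1$, \Cref{Lemma-Radius-ra-NeP} guarantees $\mathcal{Q}_f(\mathbb{D}_r)\subset\Omega_{Ne}$ as soon as the radius does not exceed $2/3$, i.e. $2nr^n/(1-r^{2n})\leq 2/3$. Clearing denominators turns this into the quadratic inequality $r^{2n}+3nr^n-1\leq 0$ in the variable $x=r^n$. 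Solving and rationalizing the positive root via the identity $(\sqrt{9n^2+4}-3n)/2=2/(3n+\sqrt{9n^2+4})$ gives exactly $r^n\leq 2/(3n+\sqrt{9n^2+4})$, that is, $r\leq\rho_{\scriptscriptstyle{\mathcal{W}}}$.

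For sharpness I would take the extremal function $f_{\scriptscriptstyle{\mathcal{W}}}(z)=z(1+z^n)/(1-z^n)$. Its membership in $\mathcal{W}_n$ is immediate since $f_{\scriptscriptstyle{\mathcal{W}}}(z)/z=(1+z^n)/(1-z^n)\in\mathcal{P}_n$, and a direct computation gives $\mathcal{Q}_{f_{\scriptscriptstyle{\mathcal{W}}}}(z)=1+2nz^n/(1-z^{2n})$. Using the defining relation $r^{2n}+3nr^n=1$ at $r=\rho_{\scriptscriptstyle{\mathcal{W}}}$, one checks that $\mathcal{Q}_{f_{\scriptscriptstyle{\mathcal{W}}}}$ attains the value $5/3$ at $z=\rho_{\scriptscriptstyle{\mathcal{W}}}$ and the value $1/3$ at $z=\rho_{\scriptscriptstyle{\mathcal{W}}}e^{i\pi/n}$, both of which lie on $\partial\Omega_{Ne}$. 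Thus $\mathcal{Q}_{f_{\scriptscriptstyle{\mathcal{W}}}}\left(\partial\mathbb{D}_{\rho_{\scriptscriptstyle{\mathcal{W}}}}\right)\cap\partial\Omega_{Ne}\neq\emptyset$, so no larger radius is admissible and the estimate is sharp.

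The computations are entirely routine, being the $\mathcal{W}_n$ analogue of the $\mathcal{G}_1$ case already established, but with only a single factor $zp'/p$ rather than two. I expect no genuine obstacle; the single point requiring mild care is the algebraic identification of the positive root of $x^2+3nx-1=0$ with the stated closed form, and the observation that because the center sits precisely at $a=1$, the relevant comparison quantity from \Cref{Lemma-Radius-ra-NeP} is indeed $2/3$.
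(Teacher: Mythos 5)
Your proposal is correct and follows essentially the same route as the paper: the same reduction $f(z)=zp(z)$ with $p\in\mathcal{P}_n$, the same application of \Cref{Lemmas-Combined-Ravi-1997-Shah-1972}\eqref{Lemmas-Combined-b-Shah} to get $\left|\mathcal{Q}_f(z)-1\right|\leq 2nr^n/(1-r^{2n})$, the same comparison with $2/3$ via \Cref{Lemma-Radius-ra-NeP}, and the same extremal function $z(1+z^n)/(1-z^n)$. The only (minor) difference is in the sharpness check: you evaluate at $z=\rho_{\scriptscriptstyle{\mathcal{W}}}e^{i\pi/n}$ to obtain the cusp value $1/3$, which is valid for every $n$, whereas the paper uses $z=-\rho_{\scriptscriptstyle{\mathcal{W}}}$ (which gives $1/3$ only for odd $n$); both arguments suffice since the value $5/3$ at $z=\rho_{\scriptscriptstyle{\mathcal{W}}}$ already establishes sharpness.
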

\begin{proof}
	Let $f\in\mathcal{W}_n$ and $|z|=r<1$. Then $h(z)={f(z)}/{z}\in\mathcal{P}_n$ and \Cref{Lemmas-Combined-Ravi-1997-Shah-1972}\eqref{Lemmas-Combined-b-Shah}  gives
	\begin{align*}
	\left|\frac{zh'(z)}{h(z)}\right| \leq\frac{2nr^n}{1-r^{2n}}.
	\end{align*}
	On using the above inequality in the identity
	$\mathcal{Q}_f(z)=1+{zh'(z)}/{h(z)}$,
	we obtain
	\begin{align*}
	\left|\mathcal{Q}_f(z)-1\right|\leq {2nr^n}/{(1-r^{2n})}.
	\end{align*}
	Now applying \Cref{Lemma-Radius-ra-NeP}, it follows that $\mathcal{Q}_f\in\Omega_{Ne}$
	if $2nr^n/(1-r^{2n})\leq2/3$, or, if $r^{2n}+3nr^n-1\leq0$. The last inequality yields $r\leq\rho_{\scriptscriptstyle{\mathcal{M}}}$. For the function $f_{\scriptscriptstyle{\mathcal{W}}}(z)=z(1+z^n)/(1-z^n)$ satisfying $f(z)/z\in\mathcal{P}_n$, we have $\mathcal{Q}_{f_\mathcal{W}}(-\rho_{\scriptscriptstyle{\mathcal{W}}})=1/3$ and $\mathcal{Q}_{f_\mathcal{W}}(\rho_{\scriptscriptstyle{\mathcal{W}}})=5/3$. Therefore,
	\begin{align*}
	 \mathcal{Q}_{f_\mathcal{W}}\left(\partial\mathbb{D}_{\rho_\mathcal{W}}\right)\cap\partial\Omega_{Ne}=\{1/3,5/3\}\neq\emptyset,
	\end{align*}
	and hence the estimate is best possible.	
\end{proof}

Besides the above mentioned two classes, similar type of radius results for various other classes can be obtained. Among these, as an exhibit, the authors favor the following class.
\subsection*{The class $\mathcal{M}_n(\beta)$}
For $\beta>1$, the family of functions $\mathcal{M}_n(\beta)$ introduced by Uralegaddi et al. \cite{Uralegaddi-Ganigi-Sarangi} is defined as
\begin{align*}
\mathcal{M}_n(\beta):=\left\{ f\in\mathcal{A}_n:\mathrm{Re}\left(\mathcal{Q}_f(z)\right)<\beta,\; z\in\mathbb{D} \right\}.
\end{align*}
In terms of subordination, $\mathcal{M}_n(\beta)$ has the following characterization:
\begin{align*}
f\in\mathcal{M}_n(\beta) \iff  \mathcal{Q}_f(z)\prec \frac{1+(1-2\beta)z^n}{1-z^n}
,  \qquad \beta>1.
\end{align*}
The class $\mathcal{M}_n(\beta)$ gained less attention from the researchers working in this direction. Nevertheless, it compeers the same significance, in analytic properties, as that of the class of starlike functions with negative coefficients introduced by Silverman \cite{Silverman-1975-UFs-Neg-Coeficients-PAMS}. For this interesting class of functions, we have the following result without details of the proof, as the proof can be traced in lines similar to the proof of \Cref{Theorem-Radius-Close-to-Stralike-RRN}.

\begin{theorem}
	The $\mathcal{S}^*_{Ne,n}$-radius for the class $\mathcal{M}_n(\beta)$ is given by
	\begin{align*}
	{R}_{\mathcal{S}^*_{Ne,n}}\left(\mathcal{M}_n(\beta)\right)=
	(3\beta-2)^{-1/n}.
	\end{align*}
	The result is sharp for the function $f(z)=z(1-z^n)^{2(\beta-1)/n}$.
\end{theorem}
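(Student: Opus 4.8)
The plan is to follow the template of \Cref{Theorem-Radius-Close-to-Stralike-RRN}: reduce the membership $\mathcal{Q}_f(\mathbb{D}_r)\subset\Omega_{Ne}$ to a disk-in-nephroid condition via \Cref{Lemma-Radius-ra-NeP}, solve the resulting quadratic in $r^n$, and exhibit the stated extremal function as the sharpness witness.

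First I would use the subordination characterization $\mathcal{Q}_f(z)\prec \psi(z):=(1+(1-2\beta)z^n)/(1-z^n)$ recorded just before the theorem. Writing $\psi$ as the composition $M\circ(z\mapsto z^n)$ with the Möbius map $M(\zeta):=(1+(1-2\beta)\zeta)/(1-\zeta)$, and noting that $z\in\mathbb{D}_r$ forces $z^n\in\mathbb{D}_{r^n}$ together with the Schwarz bound $|w(z)|\le|z|$ for the subordinating function, I obtain $\mathcal{Q}_f(\mathbb{D}_r)\subseteq M(\mathbb{D}_{r^n})$. Computing this image disk (its real-axis diameter runs from $M(-r^n)$ to $M(r^n)$ since $M$ has real coefficients and preserves the real axis) gives, for $|z|=r$,
$$\left|\mathcal{Q}_f(z)-\frac{1+(1-2\beta)r^{2n}}{1-r^{2n}}\right|\le\frac{2(\beta-1)r^n}{1-r^{2n}}.$$
This is the exact analogue of the disk \eqref{Disk-Radius-Close-to-Starlike}; note, however, that \Cref{Lemmas-Combined-Ravi-1997-Shah-1972}\eqref{Lemmas-Combined-a-Ravi} is stated only for $\alpha\in[0,1)$, so for $\beta>1$ the estimate must be produced by this direct image computation rather than by quoting that lemma with $\alpha=\beta$.

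Next I would set $a:=(1+(1-2\beta)r^{2n})/(1-r^{2n})$ and $r_0:=2(\beta-1)r^n/(1-r^{2n})$. A short computation gives $a-1=2(1-\beta)r^{2n}/(1-r^{2n})<0$, so here $a<1$ — in contrast to the close-to-starlike case where $a>1$. Hence the relevant branch of \Cref{Lemma-Radius-ra-NeP} is $r_a=a-1/3$, and the disk lies in $\Omega_{Ne}$ precisely when $r_0\le a-1/3$. Clearing the positive factor $1-r^{2n}$ and simplifying reduces this to $(3\beta-2)r^{2n}+3(\beta-1)r^n-1\le0$; the discriminant collapses to $(3\beta-1)^2$, and the two roots in $x=r^n$ are $x=-1$ and $x=1/(3\beta-2)$, so the admissible range is $r^n\le 1/(3\beta-2)$, i.e. $r\le(3\beta-2)^{-1/n}$. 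Along the way I would record that $a$ is decreasing in $r$ and that $a=\tfrac13+r_0>\tfrac13$ at $r=\rho$, so the hypothesis $1/3<a<5/3$ of \Cref{Lemma-Radius-ra-NeP} holds throughout $(0,\rho]$.

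For sharpness I would take $f(z)=z(1-z^n)^{2(\beta-1)/n}$, compute $zf'/f$ by logarithmic differentiation to get $\mathcal{Q}_f(z)=(1+(1-2\beta)z^n)/(1-z^n)=\psi(z)$, which confirms $f\in\mathcal{M}_n(\beta)$, and then evaluate at the real point $z=\rho$: substituting $\rho^n=1/(3\beta-2)$ yields $\mathcal{Q}_f(\rho)=1/3\in\partial\Omega_{Ne}$, the left vertex $\varphi_{\scriptscriptstyle{Ne}}(-1)$ of the nephroid. The main obstacle is the bookkeeping forced by $a<1$: one must select the correct branch of \Cref{Lemma-Radius-ra-NeP}, check that $a$ stays above $1/3$ on the whole interval, and — since $\beta>1$ lies outside the range of \Cref{Lemmas-Combined-Ravi-1997-Shah-1972} — justify the disk estimate directly from the subordination instead of borrowing the Ravi–Rønning bound verbatim.
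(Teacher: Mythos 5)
Your proposal is correct and follows essentially the approach the paper intends: the paper gives no detailed proof of this theorem, saying only that it can be traced along the lines of \Cref{Theorem-Radius-Close-to-Stralike-RRN}, and that is exactly the template you execute (disk estimate from the subordination, \Cref{Lemma-Radius-ra-NeP}, quadratic in $r^n$ with discriminant $(3\beta-1)^2$, sharpness via $f(z)=z(1-z^n)^{2(\beta-1)/n}$ hitting the cusp value $1/3$). The two points you flag --- that the center satisfies $a<1$ so the branch $r_a=a-1/3$ of \Cref{Lemma-Radius-ra-NeP} applies (in contrast to the $a>1$ case of \Cref{Theorem-Radius-Close-to-Stralike-RRN}), and that the disk must be computed directly from the M\"obius image since \Cref{Lemmas-Combined-Ravi-1997-Shah-1972}\eqref{Lemmas-Combined-a-Ravi} is stated only for $\alpha\in[0,1)$ --- are precisely the adjustments the paper leaves implicit, and you handle both correctly.
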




\end{document}